\newcommand{\vu}{u}
\newcommand{\vvu}{\vvct{u}}
\newcommand{\vv}{v}
\newcommand{\vw}{w}
\newcommand{\vvw}{\vvct{w}}
\newcommand{\vn}{\hat{n}}
\newcommand{\vf}{f}
\newcommand{\vo}{0}
\newcommand{\vzero}{0}
\newcommand{\tsigma}{\boldsymbol{\sigma}}
\newcommand{\tzero}{\mathbf{0}}
\newcommand{\tJ}{\mathbf{J}}
\newcommand{\Ch}{C}
\newcommand{\im}{\mathbf{i}}
\newcommand{\av}[1]{ \{\!\!\{#1\}\!\!\} }
\newcommand{\ju}[1]{ [\![#1]\!] }
\newcommand{\vvct}[1]{\underline{#1}}
\newcommand{\ten}[1]{\mathbf{#1}}
\newtheorem{thm}{Theorem}[section]
\newtheorem{lem}[thm]{Lemma}
\newtheorem{rem}[thm]{Remark}
\newtheorem{exm}[thm]{Example}
\title[Penalty Term and Time Step Estimates]{Sharp Penalty Term and Time Step Bounds for the Interior Penalty Discontinuous Galerkin Method for Linear Hyperbolic Problems*}
\author{S. Geevers$^1$, J.J.W. van der Vegt$^1$}
\address{1. Department of Applied Mathematics, University of Twente, Enschede, the Netherlands (e-mail: s.geevers@utwente.nl, j.j.w.vandervegt@utwente.nl)}
\thanks{*This work was funded by the Shell Global Solutions International B.V. under contract no.~PT45999.}
\begin{document}

\maketitle

\begin{abstract}
  We present \emph{sharp} and \emph{sufficient} bounds for the interior penalty term and time step size to ensure stability of the symmetric interior penalty discontinuous Galerkin (SIPDG) method combined with an explicit time-stepping scheme. These conditions hold for generic meshes, including unstructured nonconforming heterogeneous meshes of mixed element types, and apply to a large class of linear hyperbolic problems, including the acoustic wave equation, the (an)isotropic elastic wave equations, and Maxwell's equations. The penalty term bounds are computed elementwise, while bounds for the time step size are computed at weighted submeshes requiring only a small number of elements and faces. Numerical results illustrate the sharpness of these bounds.
\end{abstract}

\section{Introduction}
Realistic wave problems often involve large three-dimensional domains, with complex boundary layers, sharp material interfaces and detailed internal structures. Solving such problems therefore requires a numerical method that is efficient in terms of both memory usage and computation time and is flexible enough to deal with interfaces and internal structures without leading to an unnecessary overhead. 

A standard finite difference scheme therefore falls short, since it cannot efficiently deal with complex material interfaces, and since small internal structures impose global restrictions on the grid resolution. Finite element methods overcome these problems, since they can be applied to unstructured meshes. However, the finite element method, combined with an explicit time-stepping scheme, requires solving mass matrix-vector equations during every time step. This significantly increases the computational time when the mass matrix is not (block)-diagonal. To obtain diagonal mass matrices without losing the order of accuracy, several mass-lumping techniques have been developed; see, for example, \cite{komatitsch99, chin99, cohen01, mulder13}. However, for higher order elements, these techniques require additional quadrature points and degrees of freedom to maintain the optimal order of accuracy. 

An alternative is the discontinuous Galerkin (DG) finite element method. This method is similar to the conforming finite element method but allows its approximation functions to be discontinuous at the element boundaries, which naturally results in a block-diagonal mass matrix. Additional advantages of this method are that it also supports meshes with hanging nodes, and that the extension to arbitrary higher order polynomial basis functions is straightforward and can be adapted elementwise. The downside of the DG method, however, is that the discontinuous basis functions can result in significantly more degrees of freedom. 

Still, because of its advantages, numerous DG schemes have already been developed and analyzed for linear wave problems, including the symmetric interior penalty discontinuous Galerkin (SIPDG) method; see, for example, \cite{grote06, grote08, antonietti12}. The advantage of the SIPDG method is that it is based on the second order formulation of the problem, while schemes based on a first order formulation require solving additional variables leading to more memory usage. The SIPDG and several alternatives have also been compared and analyzed in \cite{arnold02}, from which it follows that the SIPDG method is one of the most attractive options because of its consistency, stability, and optimal convergence rate. 

However, to efficiently apply the SIPDG method with an explicit time-stepping scheme, the interior penalty term needs to be sufficiently large and the time step size needs to be sufficiently small. If the penalty term is set too small or the time step size too large, the SIPDG scheme will become unstable. On the other hand, increasing the penalty term will lead to a more severe time step restriction, and a smaller time step size results in a longer computation time. For this reason, there have been multiple studies on finding suitable choices for these parameters.

In \cite{shahbazi05, epshteyn07}, for example, sufficient conditions have been derived for the penalty term, for triangular and tetrahedral meshes, while the results of \cite{epshteyn07} have been sharpened in \cite{mulder14} for tetrahedral meshes. However, the numerical results in Section \ref{sec:numResults} illustrate that these estimates are still not always very sharp. In \cite{mulder14} they also studied the effects of the penalty term, element shape, and polynomial order on the CFL condition for tetrahedral elements, although these results may not give sufficient conditions for nonuniform grids. Penalty term conditions for regular square and cubic meshes have been studied in \cite{ainsworth06, deBasabe10, agut13}, where \cite{deBasabe10, agut13} also studied the CFL conditions for these meshes. However, the analysis in these studies only holds for uniform homogeneous meshes. For generic heterogeneous meshes, sharp parameter conditions have remained an open problem. 

In this paper we solve these problems by deriving sufficient conditions for both the penalty term and time step size, which lead to sharp estimates, and which hold for generic meshes, including unstructured nonconforming heterogeneous meshes of mixed element types with different types of boundary conditions. These conditions also apply to a large class of linear wave problems, including the acoustic wave equation, Maxwell's equations, and (an)isotropic elastic wave equations. We compare our estimates to some of the existing ones and illustrate the sharpness of our parameter estimates with several numerical tests. 

The paper is constructed as follows: in Section \ref{sec:tenNotation} we introduce some tensor notation, such that we can present the general linear hyperbolic model in Section \ref{sec:genHypModel}, and present the symmetric interior penalty discontinuous Galerkin method in Section \ref{sec:SIPDG}. After this, we derive sufficient conditions for the penalty parameter in Section \ref{sec:estPen} and sufficient conditions for the time step size in Section \ref{sec:timeStepEst}. Finally, we compare and test the sharpness of our estimates in Section \ref{sec:numResults} and give a summary in Section \ref{sec:conclusion}.

\section{Some tensor notation}
\label{sec:tenNotation}
Before we present the linear hyperbolic model, it is useful to define some tensor notation. Let $M$ and $N$ be two nonnegative integers. Also, let $A\in\mathbb{R}^{n_1\times\cdots\times n_N}$ be a tensor of order $N$ and $B\in\mathbb{R}^{m_1\times\cdots\times m_M}$ be a tensor of order $M$. We define the tensor product $AB\in\mathbb{R}^{n_1\times\cdots\times n_N\times m_1\times\cdots\times m_M}$, which is of order $N+M$, as follows:
\begin{align*}
[AB]_{i_1,\dots,i_N,j_1,\dots,j_M} :=& A_{i_1,\dots,i_N}B_{j_1,\dots,j_M},
\end{align*}
for $(i_1,\dots,i_N,j_1,\dots,j_M)\in(\mathbb{N}_{n_1},\dots,\mathbb{N}_{n_N},\mathbb{N}_{m_1},\dots,\mathbb{N}_{m_M})$, where $\mathbb{N}_n := \{1,\dots,n\}$. Now let $A\in\mathbb{R}^{n_1\times\cdots\times n_N\times p}$ be a tensor of order $N+1$ and $B\in\mathbb{R}^{p\times m_1\times\cdots\times m_M}$ be a tensor of order $M+1$. We define the dot product $A\cdot B\in\mathbb{R}^{n_1\times\cdots\times n_N\times m_1\times\cdots\times m_M}$, which is of order $N+M$, as follows:
\begin{align*}
[A\cdot  B]_{i_1,\dots,i_N,j_1,\dots,j_M} :=& \sum_{k=1}^p A_{i_1,\dots,i_N,k}B_{k,j_1,\dots,j_M},
\end{align*}
for $(i_1,\dots,i_N,j_1,\dots,j_M)\in(\mathbb{N}_{n_1},\dots,\mathbb{N}_{n_N},\mathbb{N}_{m_1},\dots,\mathbb{N}_{m_M})$. For the double dot product, let $A\in\mathbb{R}^{n_1\times\cdots\times n_N\times p_1\times p_2}$ be a tensor of order $N+2$ and $B\in\mathbb{R}^{p_2\times p_1\times m_1\times\cdots\times m_M}$ be a tensor of order $M+2$. We define $A:B\in\mathbb{R}^{n_1\times\cdots\times n_N\times m_1\times\cdots\times m_M}$, which is of order $N+M$, as follows:
\begin{align*}
[A:  B]_{i_1,\dots,i_N,j_1,\dots,j_M} :=& \sum_{k_1=1}^{p_1}\sum_{k_2=1}^{p_2} A_{i_1,\dots,i_N,k_1,k_2}B_{k_2,k_1,j_1,\dots,j_M},
\end{align*}
for $(i_1,\dots,i_N,j_1,\dots,j_M)\in(\mathbb{N}_{n_1},\dots,\mathbb{N}_{n_N},\mathbb{N}_{m_1},\dots,\mathbb{N}_{m_M})$. Now let $A\in\mathbb{R}^{n_1\times\cdots\times n_N}$ be a tensor of order $N$ again. We define the transpose $A^t\in\mathbb{R}^{n_N\times\cdots\times n_1}$ as follows:
\begin{align*}
A^t_{i_N,\dots,i_1} := A_{i_1,\dots,i_N},
\end{align*}
for $(i_N,\dots,i_1)\in(\mathbb{N}_{n_N},\dots,\mathbb{N}_{n_1})$. A tensor $A$ is called symmetric if $A=A^t$ and we define $\mathbb{R}^{n_1\times\cdots\times n_N}_{sym}$ to be the set of symmetric tensors in $\mathbb{R}^{n_1\times\cdots\times n_N}$. Now let $B\in\mathbb{R}^{n_1\times\cdots\times n_N}$ be a tensor of the same size as $A$. We define the inner product as follows:
\begin{align*}
(A,B) := \sum_{i_1=1}^{n_1}\cdots\sum_{i_N=1}^{n_N}A_{i_1,\dots,i_N}B_{i_1,\dots,i_N}.
\end{align*}
The corresponding norm of a tensor is given by
\begin{align*}
\|A \|^2 := (A,A).
\end{align*}
In the next section we will present the general linear hyperbolic problem, which we will solve using the SIPDG method.

\section{A general linear hyperbolic model}
\label{sec:genHypModel}
Let $\Omega\subset\mathbb{R}^d$ be a $d$-dimensional open domain with a Lipschitz boundary $\partial\Omega$, and let $(0,T)$ be the time domain. Also, let $\{\Gamma_d,\Gamma_n\}$ be a partition of $\partial\Omega$, corresponding to Dirichlet and von Neumann boundary conditions, respectively. We define the following linear hyperbolic problem:
\begin{subequations}
\begin{align}
\rho\partial_t^2\vu &= \nabla\cdot C:\nabla \vu + \vf &&\text{in }\Omega\times(0,T), \\
\vn\cdot C:\vn\vu &=\vo &&\text{on }\Gamma_d\times(0,T), \\
\vn\cdot C:\nabla\vu &= \vo &&\text{on }\Gamma_n\times(0,T), \\
\vu|_{t=0} &= \vu_0 &&\text{in }\Omega, \\
\partial_t\vu|_{t=0} &= \vv_0 && \text{in }\Omega,
\end{align}
\label{eq:model}%
\end{subequations}
where $\vu:\Omega\times(0,T)\rightarrow\mathbb{R}^m$ is a vector of $m$ variables that are to be solved, $\nabla$ is the gradient operator, $\rho:\Omega\rightarrow\mathbb{R}^+$ is a positive scalar field, $C:\Omega\rightarrow\mathbb{R}^{d\times m\times m\times d}_{sym}$ a fourth order tensor field, $\vf:\Omega\times(0,T)\rightarrow\mathbb{R}^{m}$ the external volume force, and $\vn:\partial\Omega\rightarrow\mathbb{R}^d$ the outward pointing normal unit vector. 

We make some assumptions on the material parameters. First, we assume that there exist positive constants $\rho_{min}, \rho_{max}$ such that 
\begin{align}
0<\rho_{min}&\leq\rho\leq\rho_{max} &&\text{in }\Omega.
\label{eq:rhoBound}
\end{align}
We also assume that the tensor field is symmetric, $C=C^t$, and that there exist linear subspaces $\Sigma_0, \Sigma_1\subset\mathbb{R}^{d\times m}$, with $\Sigma_0+\Sigma_1=\mathbb{R}^{d\times m}$ and $\Sigma_0 \perp\Sigma_1$, and constants $c_{min},c_{max}$ such that $C$ is nonnegative and bounded in the following sense:
\begin{subequations}
\begin{align}
0<c_{min} \leq \frac{\tsigma^t:C:\tsigma }{\| \tsigma \|^2} &\leq c_{max} &&\text{in }\Omega,\text{ for all } \tsigma\in\Sigma_1/\{\ten{0}\}, \\
C:\tsigma &=\ten{0} &&\text{in }\Omega,\text{ for all } \tsigma\in\Sigma_0.
\end{align}
\label{eq:cBound}%
\end{subequations}
By $\Sigma_0+\Sigma_1=\mathbb{R}^{d\times m}$ we mean that any $\tsigma\in\mathbb{R}^{d\times m}$ can be written as $\tsigma=\tsigma_0+\tsigma_1$ for some $\tsigma_0\in\Sigma_0, \tsigma_1\in\Sigma_1$, and by $\Sigma_0\perp\Sigma_1$ we mean that $(\tsigma_0,\tsigma_1)=0$ for all $\tsigma_0\in\Sigma_0, \tsigma_1\in\Sigma_1$.

By choosing the correct tensor and scalar field we can obtain a number of hyperbolic problems, including Maxwell's equations, the acoustic wave equation and the (an)isotropic elastic wave equations. We illustrate this in the following examples, where we define the tensor and vector fields using Cartesian coordinates.
\begin{exm} 
Consider the acoustic wave equation written in the following dimensionless form:
\begin{align*}
\partial_t^2 p = \nabla\cdot c^2\nabla p,
\end{align*}
where $p:\Omega\times(0,T)\rightarrow\mathbb{R}$ is the pressure field and $c:\Omega\rightarrow\mathbb{R}^+$ the acoustic wave velocity. We can write these equations in the form of (\ref{eq:model}) by setting $m=1$, $\vu= p$, $\vf=\vo$, $\rho=1$, and
\begin{align*}
C_{i,1,1,j} := \delta_{ij}c^2
\end{align*}
for $i,j=1,\dots,d$, where $\delta$ is the Kronecker delta function.
\label{exm:acousticWave}
\end{exm} 

\begin{exm}
Consider Maxwell's equations in a domain with zero conductivity written in the following dimensionless form:
\begin{align*}
\epsilon\partial_t^2E=-\nabla\times(\mu^{-1}\nabla\times E) + \partial_tj,
\end{align*}
where $E:\Omega\times(0,T)\rightarrow\mathbb{R}^3$ is the electric field, $\epsilon:\Omega\rightarrow\mathbb{R}^+$ the relative electric permittivity, $\mu:\Omega\rightarrow\mathbb{R}^+$ the relative magnetic permeability, and $j:\Omega\times(0,T)\rightarrow\mathbb{R}^3$ the applied current density. We can write these equations in the form of (\ref{eq:model}) by setting $d=3$, $m=3$, $\vu= E$, $\vf=\partial_tj$, $\rho=\epsilon$, and
\begin{align*}
C_{i_D,i_M,j_M,j_D} := \mu^{-1}\big(\delta_{i_D,j_D}\delta_{i_M,j_M} - \delta_{i_D,j_M}\delta_{i_M,j_D}\big)
\end{align*}
for $i_D,i_M,j_D,j_M=1,2,3$, where $\delta$ is the Kronecker delta function.
\label{exm:Maxwell}
\end{exm}

\begin{exm} 
Consider the linear anisotropic elastic wave equations. They can immediately be written in the form of (\ref{eq:model}) with $\vu:\Omega\times (0,T) \rightarrow \mathbb{R}^d$ being the displacement vector and $C:\Omega\rightarrow\mathbb{R}^{d\times d\times d\times d}_{sym}$ being the elasticity tensor, which has the additional symmetries
\begin{align*}
C_{i_D,i_M,j_M,j_D} = C_{i_M,i_D,j_M,j_D} = C_{i_D,i_M,j_D,j_M}
\end{align*}
for $i_D,i_M,j_D,j_M=1,\dots,d$. For the special case of isotropic elasticity, we can write
\begin{align*}
C_{i_D,i_M,j_M,j_D} = \lambda\delta_{i_D,i_M}\delta_{j_D,j_M} + \mu(\delta_{i_D,j_D}\delta_{i_M,j_M} + \delta_{i_D,j_M}\delta_{i_M,j_D})
\end{align*}
for $i_D,i_M,j_D,j_M=1,\dots,d$, where $\delta$ is the Kronecker delta function and $\lambda, \mu$ are the Lam\'e parameters.
\label{exm:anisotropicWave}
\end{exm}

In the next section we present the DG method that we use to solve these linear hyperbolic problems.

\section{A discontinuous Galerkin method}
\label{sec:SIPDG}
To explain the DG method, we first present the weak formulation of (\ref{eq:model}). After that, we introduce the tesselation of the domain, the discrete function spaces, and the trace operators. We then present the symmetric interior penalty discontinuous Galerkin (SIPDG) method and derive some of its properties.

\subsection{The weak formulation}
Define the following function space:
\begin{align*}
U& :=\big\{ \vu\in L^2(\Omega)^m \;\big|\; C:\nabla\vu\in L^2(\Omega)^{d\times m}, \;\vn\cdot C:\vn\vu = 0 \text{ on }\Gamma_d \big\}. 
\end{align*}
Assume that $\vu_0\in U$, $\vv_0\in L^2(\Omega)^m$, and $\vf\in L^2\big(0,T;L^{2}(\Omega)^m\big)$. The weak formulation of (\ref{eq:model}) is finding $\vu\in L^2\big(0,T;U\big)$, with $\partial_t\vu\in L^2\big(0,T; L^2(\Omega)^m\big)$ and $\partial_t(\rho\partial_t\vu)\in L^2\big(0,T;U^{-1}\big)$, such that $\vu|_{t=0}=\vu_0$, $\partial_t \vu|_{t=0} = \vv_0$, and
\begin{align}
\langle \partial_t(\rho\partial_t\vu), \vw\rangle + a(\vu,\vw) &= ( \vf,\vw ), &&\text{a.e. }t\in(0,T),\text{ for all }\vw\in U.
\label{eq:weakForm}
\end{align}
Here $(\cdot,\cdot)$ denotes the inner product of $L^2(\Omega)^m$, $\langle \cdot,\cdot \rangle$ denotes the pairing between $U^{-1}$ and $U$, and $a(\cdot,\cdot):U\times U\rightarrow\mathbb{R}$ is the semielliptic bilinear operator given by
\begin{align*}
a(\vu,\vw) := \int_{\Omega} (\nabla\vu)^t : C : \nabla\vw \; dx.
\end{align*}
Using (\ref{eq:cBound}) it can be shown that $U$ is a separable Hilbert space, and from (\ref{eq:rhoBound}) it follows that the norm $\|\vu\|_{\rho}^2:=(\rho\vu,\vu)$ is equivalent to the standard $L^2(\Omega)^m$ inner product. Using these properties, it can be proven, in a way analogous to the proof of \cite[Chapter 3, Theorem 8.1]{lions72} that (\ref{eq:weakForm}) is well-posed and has a unique solution.

\subsection{Tesselation, discrete function space, and trace operators} 
Let  $\mathcal{T}_h$ be a set of nonoverlapping open domains in $\mathbb{R}^d$, referred to as elements, such that every element $e\in\mathcal{T}_h$ fits inside a $d$-dimensional sphere of radius $h$, and such that $\overline{\Omega} := \bigcup_{e\in\mathcal{T}_h} \overline{e}$, where $\overline{e}$ and $\overline{\Omega}$ are the closures of $e$ and $\Omega$, respectively. We call $\mathcal{T}_h$ the tesselation of $\Omega$. Using the tesselation we define the set of faces $\mathcal{F}_h :=\mathcal{F}_{h,in}\cup \mathcal{F}_{h,b}$ and the union of all faces $\Gamma_{h} := \bigcup_{f\in\mathcal{F}_{h}} f$, where $\mathcal{F}_{h,in} := \big\{ \partial e\cap \partial e' \big\}_{e,e'\in\mathcal{T}_h}$ is the set of all internal faces, $\mathcal{F}_{h,b} := \big\{ \partial e\cap \partial\Omega \big\}_{e\in\mathcal{T}_h}$ is the set of all boundary faces, and $\partial e$ denotes the element boundary. Furthermore, we let $\{ \mathcal{F}_{h,d}, \mathcal{F}_{h,n}\}$ be the partition of $\mathcal{F}_{h,b}$ corresponding to the Dirichlet and Neumann boundary conditions, such that $\bigcup_{f\in\mathcal{F}_{h,d}} f = \Gamma_d$ and $\bigcup_{f\in\mathcal{F}_{h,n}} f = \Gamma_n$.

We use these sets of elements and faces to construct the discrete function space. To do this, let $e\in\mathcal{T}_h$ be an element with $\tilde e$ the corresponding reference element, which depends only on the shape of $e$.  For every reference element $\tilde e$ we define a discrete function space $\tilde U_e:\tilde e\rightarrow\mathbb{R}^m$, such that $\tilde U_e = [\mathcal{P}^{K_e}(\tilde e)]^m$, where $\mathcal{P}^{K_e}(\tilde e)$ is a finite set of polynomial functions on $\tilde e$, which depends only on the degree $K_e$ and $\tilde e$. Now let $\phi_e:\tilde e\rightarrow e$ be an invertible polynomial mapping, such that $|\tJ_e| := |\mathrm{det}(\nabla\phi_e)|\geq|\tJ_e|_{min}> 0$ in $e$, for some positive constant $|\tJ_e|_{min}$. Using this mapping and the reference function space $\tilde U_e$, we can construct the function space on the physical element $U_e$ as follows:
\begin{align*}
U_e := \big\{ u:e\rightarrow\mathbb{R}^m \;|\; u=\tilde u \circ\phi_e^{-1}, \text{ for some }\tilde u\in \tilde U_e \big\}.
\end{align*}
This can then be used to construct the discrete finite element space:
\begin{align*}
U_h := \big\{ u\in [L^2(\Omega)]^m \;\big|\; u|_e \in U_e, e\in\mathcal{T}_h \big\}.
\end{align*}
The functions in the finite element space are continuous within every element, but can be discontinuous at the faces between two elements. To construct a discrete version of the bilinear form $a$, which can deal with these discontinuities, we introduce trace operators $\av{\cdot}$ and $\ju{\cdot}$ given below:
\begin{align*}
\av{\phi} \big|_f &:= \frac{1}{|\mathcal{T}_f|}\sum_{e\in\mathcal{T}_f} \phi|_{\partial e\cap f}, && f\in\mathcal{F}_h, \\
\ju{\vu} \big|_f &:= \sum_{e\in\mathcal{T}_f} (\vn \vu)|_{\partial e\cap f}, && f\in\mathcal{F}_h,
\end{align*}
where $\mathcal{T}_f$ is the set of elements adjacent to $f$, $|\mathcal{T}_f|$ is the number of elements adjacent to $f$, and $\vn|_{\partial e}$ is the outward pointing normal vector of element $e$. The first trace operator is the average of traces, while the second operator is known as the jump operator. Using the first trace operator $\av{\cdot}$, we can construct the numerical flux operator $(\cdot)^*:U_h\rightarrow [L^2(\Gamma_h)]^m$, which assigns a unique value for $\vu\in U_h$ at the faces, as follows:
\begin{align}
\vu^*|_f &= \begin{cases}
\av{\vu} & f\in\mathcal{F}_{h,in}\cap\mathcal{F}_{h,n}, \\
\vzero  & f\in\mathcal{F}_{d}. \end{cases}
\label{eq:defFlux}
\end{align}

In order to ensure that the discrete bilinear form $a_h$ remains semielliptic, we also introduce penalty terms $\eta_e\in\mathbb{R}^+$ for every element $e\in\mathcal{T}_h$, and a penalty scaling function $\nu_h\in\bigotimes_{e\in\mathcal{T}_h} L^{\infty}(\partial e)$, with $\nu_h> 0$, which means $\nu|_{\partial e}:\partial e\rightarrow\mathbb{R}^+$ for all $e\in\mathcal{T}_h$. The penalty terms $\eta_e$ are positive dimensionless constants for which lower bounds will be derived in Section \ref{sec:estPen}. The function $\nu_h$ scales with order $h^{-1}$ and is chosen as follows:
\begin{align*}
\nu_h|_{\partial e\cap f} &:= \left( \frac{|\tJ_f|}{|\tJ_e|} \right)|_{\partial e\cap f}, && e\in\mathcal{T}_h, f\in\mathcal{F}_e,
\end{align*}
where $\mathcal{F}_e$ denotes the faces adjacent to element $e$, $|\tJ_e|:=|\mathrm{det}(\nabla\phi_e)|$ is the reference-to-physical element scale, and $|\tJ_f|$ is the reference-to-physical face scale. The face scale satisfies $|\tJ_f|=1$ in 1D, $|\tJ_f|=| \partial_1\phi_f|$ in 2D, and $|\tJ_f|=| \partial_1\phi_f \times\partial_2\phi_f |$ in 3D, where $\phi_f:\tilde f\rightarrow f$ is the reference-to-physical face mapping, and $\partial_i\phi_f$ is the derivative of $\phi_f$ in reference coordinate $i$, assuming Cartesian reference coordinates. In our numerical tests we use this scaling function, although the stability analysis in this paper holds for arbitrary positive functions $\nu_h$.

Finally, to ensure that the discrete version of $a$ is well defined we also make the following additional assumptions on the material parameters $\rho$ and $C$:
\begin{align*}
\rho|_e &\in W^{1,\infty}(e), \quad C|_e \in W^{1,\infty}(e)^{d\times m\times m\times d}_{sym}, &&e\in\mathcal{T}_h,
\end{align*}
where $W^{1,\infty}$ denotes the Sobolev space of differentiable functions with uniformly bounded weak derivatives. These assumptions together with the trace inequality imply that the element traces of $C$ and $\rho$ are well defined and bounded.

We have now introduced the function spaces, operators, and parameter assumptions needed to present the DG method in the next subsection.

\subsection{The symmetric interior penalty discontinuous Galerkin method}
We present a DG method, which is known as the symmetric interior penalty discontinuous Galkerkin (SIPDG) method. The SIPDG method is solving $\vu:[0,T]\rightarrow U_h$ such that
\begin{subequations}
\begin{align}
(\rho\partial_t^2\vu,\vw) + a_h(\vu,\vw) =& (\vf,\vw), &&\vw\in U_h, t\in[0,T], \label{eq:DGmethod0} \\
(\rho\vu|_{t=0},\vw) =& (\rho\vu_{0},\vw) &&\vw\in U_h, \\
(\rho\partial_t\vu|_{t=0},\vw)=& (\rho\vv_{0},\vw) &&\vw\in U_h,
\end{align}
\label{eq:DGmethod}%
\end{subequations}
where $a_h:U_h\times U_h\rightarrow\mathbb{R}$ is the discrete version of the elliptic operator, given by
\begin{align}
a_h(\vu,\vw) &:=  a_{h}^{(C)}(\vu,\vw) + a_{h}^{(DG)}(\vu,\vw) + a_{h}^{(DG)}(\vw,\vu) + a_{h}^{(IP)}(\vu,\vw) 
\label{eq:a_hDef1},
\end{align}
with
\begin{align*}
a_{h}^{(C)}(\vu,\vw) &:= \sum_{e\in\mathcal{T}_h} a_e^{(C)}(\vu,\vw):= \sum_{e\in\mathcal{T}_h} \int_{e} (\nabla\vu)^t:\Ch:\nabla\vw \;dx, \\
a_{h}^{(DG)}(\vu,\vw) &:=  \sum_{e\in\mathcal{T}_h} a_{\partial e}^{(DG)}(\vu,\vw) := \sum_{e\in\mathcal{T}_h} \int_{\partial e} (\vu^*-\vu)\vn:\Ch:\nabla\vw \;ds, \\
a_{h}^{(IP)}(\vu,\vw) &:= \sum_{e\in\mathcal{T}_h} \eta_e a_{\partial e}^{(IP)}(\vu,\vw) := \sum_{e\in\mathcal{T}_h} \eta_e \int_{\partial e}(\vu^*-\vu)\vn:\nu_h \Ch:\vn(\vw^*-\vw) \; ds,
\end{align*}
for all $\vu,\vw \in U_h$. The bilinear form $a^{(C)}_h$ is the same as the original elliptic operator $a$ and is the part that remains when both input functions are continuous. The bilinear form $a^{(DG)}_h$ can be interpreted as the additional part that results from partial integration of the elliptic operator $a$ when the first input function is discontinuous. Finally, the bilinear form $a^{(IP)}$ is the part that contains the interior penalty terms needed to ensure stability of the scheme.  

Using the definition of the numerical flux in (\ref{eq:defFlux}), we can rewrite the bilinear forms $a_{h}^{(DG)}$ and $a_{h}^{(IP)}$, as follows:
\begin{subequations}
\begin{align}
a_{h}^{(DG)}(\vu,\vw) &= \sum_{f\in\mathcal{F}_{h,in}\cup\mathcal{F}_{h,d}} - \int_f \ju{\vu}^t:\av{\Ch : \nabla\vw} \;ds, \\
a_{h}^{(IP)}(\vu,\vw) &= \sum_{f\in\mathcal{F}_{h,in}\cup\mathcal{F}_{h,d}} \epsilon_f\int_{f} \ju{\vu}^t:\av{\eta_h\nu_h \Ch}:\ju{\vw} \;ds, \label{eq:aIPdef2}
\end{align} %
\end{subequations}
for all $\vu,\vw \in U_h$. Here $\epsilon_f:=1/2$ for internal faces and $\epsilon_f:=1$ for faces in $\mathcal{F}_{h,d}$, and $\eta_h\in\bigotimes_{e\in\mathcal{T}_h} L^{\infty}(\partial e)$ is defined by $\eta_h|_{\partial e}:= \eta_e$ for all $e\in\mathcal{T}_h$. This scheme conforms with existing SIPDG schemes, except for a possible deviation in the interior penalty part $a_{h}^{(IP)}$. For example, for the acoustic wave equation given in Example \ref{exm:acousticWave}, this scheme is equivalent to the one in \cite{grote06} when choosing their penalty term $\mathbf{a}$ as $\mathbf{a}|_f=\epsilon_f\av{\eta\nu_h c}|_f$ for all $f\in\mathcal{F}_h$. 

Since the bilinear form is symmetric, $a_h(u,w) = a_h(w,u)$ for all $\vu,\vw\in U_h$, we can obtain, by substituting $\vw=\partial_t\vu$ into (\ref{eq:DGmethod0}), the following energy equation:
\begin{align*}
\partial_tE_h &= (\vf,\partial_t\vu), &&t\in[0,T],
\end{align*}
where $E_h:=\frac12\big\| \rho^{1/2}\vw \big\|_{0}^2 + \frac12 a_h(\vu,\vu)$ is the discrete energy, with $\|\cdot\|_{0}$ the $[L^2(\Omega)]^m$ norm. In the absence of an external force $\vf$ this implies that the discrete energy is conserved. 

However, for this energy to be well defined, in the sense that it is always nonnegative, the discrete bilinear form needs to remain semielliptic: $a_h(\vu,\vu)\geq 0$ for all $\vu\in U_h$. This then implies that any nonzero discrete eigenmode cannot grow unbounded in the absence of an external force. In case $\vu$ is a zero discrete eigenmode, we can substitute $\vw=\vu$ into (\ref{eq:DGmethod0}) to obtain $\partial_t^2 \|\rho^{1/2}\vu \|^2_0 = 2(\vf,\vu)$. This implies that, in the absence of an external force, zero eigenmodes grow at most linearly in time. This behavior can correspond to physical rigid motions, or, when there is a discrepancy between the physical and discrete zero modes, a linear drift of a spurious mode. For acoustic and elastic waves these spurious modes are absent, while for electromagnetic waves, these modes have been analyzed in, for example, \cite{buffa06}. However, even if there are spurious modes, we will not consider their drift as numerical instability, since the numerical error is expected to grow linearly in time anyway due to dispersion errors.

In the next section we will find sufficient lower bounds for the penalty term to make sure $a_h$ is semielliptic. In particular, we will show there that $a_h$ satisfies a coercivity condition that is commonly used to show optimal convergence in the energy-norm.

\section{Sufficient penalty term estimates}
\label{sec:estPen}
In this section we derive a sufficient lower bound for the penalty term and a positive constant $c_{coer}>0$, where $c_{coer}$ is independent of the mesh $\mathcal{T}_h$, such that
\begin{align}
a_{h}(\vu,\vu) \geq c_{coer} \big| \vu \big|^2_{1,h}, &&\vu\in U_h,
\label{eq:coerCond2}
\end{align}
where $|\cdot|_{1,h}$ is the discrete seminorm defined by $\big| \vu \big|^2_{1,h} := \sum_{e\in\mathcal{T}_h} \big| \vu \big|^2_{1,e}$, with
\begin{align*}
\big| \vu \big|^2_{1,e} := \int_e \|\Ch^{1/2}:\nabla\vu\|^2 \;dx \;+ \; \eta_e \int_{\partial e}  \big\| \nu_h^{1/2}\Ch^{1/2}:\vn(\vu^*-\vu) \big\|^2 \;ds.
\end{align*}
Here $\Ch^{1/2}\in\bigotimes_{e\in\mathcal{T}_h} W^{1,\infty}(e)^{d\times m\times m\times d}_{sym}$ is a tensor field such that $\Ch^{1/2}:\Ch^{1/2}=\Ch$. The existence of such a tensor field follows from Lemma \ref{lem:Csqrt}. The numerical flux $\vu^*$  is defined in (\ref{eq:defFlux}), although the stability analysis in this paper holds for arbitrary linear flux operators. Note that (\ref{eq:coerCond2}) is satisfied when
\begin{align}
a_{e}(\vu,\vu) \geq c_{coer} \big| \vu \big|^2_{1,e}, &&e\in\mathcal{T}_h, \vu\in U_h,
\label{eq:coerCond1}
\end{align}
where $a_e(\vu,\vw):=a_e^{(C)}(\vu,\vw) + a_{\partial e}^{(DG)}(\vu,\vw) + a_{\partial e}^{(DG)}(\vw,\vu) + \eta_ea_{\partial e}^{(IP)}(\vu,\vw)$. Since we can write $|\vu|_{1,e}^2 = a_e^{(C)}(\vu,\vu) + \eta_ea_{\partial e}^{(IP)}(\vu,\vu)$, it remains to bound $a_{\partial e}^{(DG)}(\vu,\vu)$ in terms of $a_e^{(C)}(\vu,\vu)$ and $a_{\partial e}^{(IP)}(\vu,\vu)$. In order to do this, we first introduce the auxiliary bilinear form $a_{\partial e}^{(C*)}:U_h\times U_h\rightarrow\mathbb{R}$ defined by
\begin{align*}
a_{\partial e}^{(C*)}(\vu,\vw) &:= \int_{\partial e} (\nabla\vu^t):\nu_h^{-1}\Ch:\nabla\vw \;ds.
\end{align*}
Note that this operator is similar to $a_{e}^{(C)}$, but integrates over the element boundary instead of the interior. Next, we show that $a_{\partial e}^{(DG)}(\vu,\vu)$ can be bounded in terms of $a_{\partial e}^{(C*)}(\vu,\vu)$ and $a_{\partial e}^{(IP)}(\vu,\vu)$:

\begin{lem}
Consider an arbitrary element $e\in\mathcal{T}_{h}$, and let $c>0$ be an arbitrary positive constant. Then the following inequality holds:
\begin{align}
|2a^{(DG)}_{\partial e}(\vu,\vu)| &\leq c^{-1} a^{(C*)}_{\partial e}(\vu,\vu) + ca^{(IP)}_{\partial e}(\vu,\vu), &&\vu\in U_h.
\label{eq:ineqA2}
\end{align}
\label{lem:ineqA2}
\end{lem}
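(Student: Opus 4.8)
The plan is to re-express all three boundary forms through the symmetric square root $\Ch^{1/2}$ from Lemma~\ref{lem:Csqrt} and then reduce \eqref{eq:ineqA2} to a pointwise Young inequality on $\partial e$.

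First I would introduce, pointwise on $\partial e$, the two tensors $\tsigma := \Ch^{1/2}:\vn(\vu^*-\vu)$ and $\ttau := \Ch^{1/2}:\nabla\vu$ in $\mathbb{R}^{d\times m}$. Using $\Ch=\Ch^{1/2}:\Ch^{1/2}$ and the symmetry $\Ch^{1/2}=(\Ch^{1/2})^t$ --- the same algebra already used to write $|\vu|^2_{1,e}=a^{(C)}_e(\vu,\vu)+\eta_e a^{(IP)}_{\partial e}(\vu,\vu)$ --- I would record the integrand identities
\begin{align*}
(\vu^*-\vu)\vn:\nu_h\Ch:\vn(\vu^*-\vu) &= \nu_h\|\tsigma\|^2, \\
(\nabla\vu)^t:\nu_h^{-1}\Ch:\nabla\vu &= \nu_h^{-1}\|\ttau\|^2, \\
(\vu^*-\vu)\vn:\Ch:\nabla\vu &= (\tsigma,\ttau),
\end{align*}
so that $a^{(IP)}_{\partial e}(\vu,\vu)=\int_{\partial e}\nu_h\|\tsigma\|^2\,ds$, $a^{(C*)}_{\partial e}(\vu,\vu)=\int_{\partial e}\nu_h^{-1}\|\ttau\|^2\,ds$, and $a^{(DG)}_{\partial e}(\vu,\vu)=\int_{\partial e}(\tsigma,\ttau)\,ds$.

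Next I would estimate the cross term pointwise. Applying the elementary inequality $2|(P,Q)|\le c\|P\|^2+c^{-1}\|Q\|^2$, valid for any tensors $P,Q$ and any $c>0$, to $P=\nu_h^{1/2}\tsigma$ and $Q=\nu_h^{-1/2}\ttau$, and noting $(\nu_h^{1/2}\tsigma,\nu_h^{-1/2}\ttau)=(\tsigma,\ttau)$, gives $2|(\tsigma,\ttau)|\le c\,\nu_h\|\tsigma\|^2+c^{-1}\nu_h^{-1}\|\ttau\|^2$ at every point of $\partial e$; the weights combine so that the cross term itself carries no power of $\nu_h$, which is exactly why $a^{(DG)}$ pairs $a^{(IP)}$ (weight $\nu_h$) with $a^{(C*)}$ (weight $\nu_h^{-1}$). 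Integrating over $\partial e$ and using $|\int_{\partial e}2(\tsigma,\ttau)\,ds|\le\int_{\partial e}2|(\tsigma,\ttau)|\,ds$ then yields \eqref{eq:ineqA2}.

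The only genuine work is the first step: carrying out the index and transpose bookkeeping for the order-four tensor $\Ch$ and its square root, in particular confirming that $\big((\vu^*-\vu)\vn\big):\Ch:\nabla\vu$ collapses to the inner product $(\Ch^{1/2}:\vn(\vu^*-\vu),\Ch^{1/2}:\nabla\vu)$. Since this is the same manipulation that underlies the stated decomposition of $|\vu|^2_{1,e}$, I expect no real obstacle, and the concluding Young-inequality step is routine.
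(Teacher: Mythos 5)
Your proposal is correct and takes essentially the same route as the paper's proof: both factor $\Ch=\Ch^{1/2}:\Ch^{1/2}$ in the cross term, attach the weights $c^{\pm1/2}\nu_h^{\pm1/2}$ to the two factors, and apply the Cauchy--Schwarz and Cauchy (Young) inequalities before identifying the resulting boundary integrals with $c\,a^{(IP)}_{\partial e}(\vu,\vu)$ and $c^{-1}a^{(C*)}_{\partial e}(\vu,\vu)$. The only difference is presentational: you make the pointwise tensor bookkeeping and the symmetry of $\Ch^{1/2}$ explicit, which the paper uses implicitly.
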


\begin{proof}
Take an arbitrary function $u\in U_h$. We can write
\begin{align*}
2a^{(DG)}_{\partial e}(\vu,\vu) &= \int_{\partial e} 2\Big( c^{1/2}\nu_h^{1/2}\Ch^{1/2}:\vn(\vu^*-\vu) , \;c^{-1/2}\nu_h^{-1/2}\Ch^{1/2}:\nabla\vu \Big) \;ds.
\end{align*}
Using the Cauchy--Schwarz and the Cauchy inequalities, we can then obtain
\begin{align*}
\big| 2a^{(DG)}_{\partial e}(\vu,\vu) \big| \leq&\;  c \int_{\partial e} \big\| \nu_h^{1/2}\Ch^{1/2}:\vn(\vu^*-\vu)  \big\|^2 \;ds  \\
&\; + c^{-1}  \int_{\partial e} \big\| \nu_h^{-1/2}\Ch^{1/2}:\nabla\vu  \big\|^2 \;ds \\
=&\; c^{-1} a^{(C*)}_{\partial e}(\vu,\vu) + ca^{(IP)}_{\partial e}(\vu,\vu).
\end{align*}
\end{proof}

We now construct the following constant:
\begin{align*}
\kappa_{e}^* &:= \sup_{\vu\in U_e, \;a^{(C)}_{e}(\vu,\vu)\neq 0 } \frac{ a_{{\partial e}}^{(C*)} (\vu,\vu) }{ a_{e}^{(C)}(\vu,\vu) },
\end{align*}
where $U_e$ is the discrete function space restricted to element $e$. From its definition, we can immediately obtain the inequality  $a_{\partial e}^{(C*)}(\vu,\vu)\leq \kappa_e^*a_e^{(C)}(\vu,\vu)$ for any $\vu\in U_h$. Using this property and Lemma \ref{lem:ineqA2} we can prove in Theorem \ref{thm:condPen1} that $\kappa_e^*$ is a sufficient lower bound for $\eta_e$ to ensure $a_e$ to be coercive. 

However, before we give this proof, we first show that $\kappa_{e}^*$ is well defined and show how it can be computed efficiently. To do this, consider an arbitrary $e\in\mathcal{T}_h$, and let $\{ \vw_i \}_{i=1}^n$ be a set of basis functions spanning $U_e$. Using these basis functions we can define positive semidefinite matrices $A_e,A^*_{\partial e}\in\mathbb{R}^{n\times n}_{sym}$ as follows:
\begin{subequations}
\begin{align}
[A_e]_{ij} &= a_e^{(C)}( \vw_i, \vw_j), &&i,j=1,\dots,n \\
[A_{\partial e}^*]_{ij} &= a_{\partial e}^{(C*)}( \vw_i, \vw_j), &&i,j=1,\dots,n.
\end{align}
\label{eq:defKappaAB}%
\end{subequations}
If matrix $A$ had been positive definite, then we could have obtained, using Lemma \ref{lem:largestEigValue}, the following relation:
\begin{align*}
\kappa_{e}^* &= \sup_{\vu\in U_e, \;a^{(C)}_{e}(\vu,\vu)\neq 0 } \frac{ a_{\partial e}^{(C*)} (\vu,\vu) }{ a_{e}^{(C)}(\vu,\vu) }
= \sup_{\vvu\in \mathbb{R}^n, \vvu\neq 0 } \frac{\vvu^tA^*_{\partial e}\vvu }{\vvu^tA_e\vvu }
= \lambda_{max} (A_e^{-1}A^*_{\partial e}),
\end{align*}
where $\lambda_{max} (A_e^{-1}A^*_{\partial e})$ denotes the largest eigenvalue of $A_e^{-1}A^*_{\partial e}$. However, the matrix $A_e$ is only positive semidefinite, so we need to obtain some intermediate results before we can show that a similar type of relation still holds. First, we show that the kernel of $A_e$ is a subset of the kernel of $A^*_{\partial e}$.

\begin{lem}
Let $e\in\mathcal{T}_h$ be an arbitrary element, and let $A_e,A^*_{\partial e}$ be matrices defined as in (\ref{eq:defKappaAB}). Then $\mathrm{Ker}(A_e)\subset\mathrm{Ker}(A^*_{\partial e})$.
\label{lem:kerAB}
\end{lem}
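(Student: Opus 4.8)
The plan is to pass from the matrix statement to a pointwise statement about the field $\Ch^{1/2}:\nabla\vu$, and then use that this field is continuous up to $\partial e$. First I would invoke the standard fact that for a symmetric positive semidefinite matrix the kernel coincides with the null set of the associated quadratic form: since $A_e$ and $A^*_{\partial e}$ are positive semidefinite, $\vvu\in\mathrm{Ker}(A_e)$ if and only if $\vvu^tA_e\vvu=0$, and likewise $\vvu\in\mathrm{Ker}(A^*_{\partial e})$ if and only if $\vvu^tA^*_{\partial e}\vvu=0$. Writing $\vu:=\sum_{i=1}^n[\vvu]_i\vw_i\in U_e$, the inclusion $\mathrm{Ker}(A_e)\subset\mathrm{Ker}(A^*_{\partial e})$ is therefore equivalent to the implication
\begin{align*}
a_e^{(C)}(\vu,\vu)=0 \quad\Longrightarrow\quad a_{\partial e}^{(C*)}(\vu,\vu)=0.
\end{align*}

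Next I would rewrite both quadratic forms using $\Ch=\Ch^{1/2}:\Ch^{1/2}$ and the symmetry of $\Ch^{1/2}$, exactly as is already done for the interior part of the seminorm $|\vu|^2_{1,e}$, to obtain
\begin{align*}
a_e^{(C)}(\vu,\vu)=\int_e\|\Ch^{1/2}:\nabla\vu\|^2\,dx,\qquad a_{\partial e}^{(C*)}(\vu,\vu)=\int_{\partial e}\nu_h^{-1}\|\Ch^{1/2}:\nabla\vu\|^2\,ds.
\end{align*}
The integrand of the first form is nonnegative, so $a_e^{(C)}(\vu,\vu)=0$ forces $\Ch^{1/2}:\nabla\vu=0$ almost everywhere in $e$. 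Since $\nu_h^{-1}>0$, to conclude $a_{\partial e}^{(C*)}(\vu,\vu)=0$ it then suffices to show that $\Ch^{1/2}:\nabla\vu$ also vanishes on $\partial e$.

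This last step, promoting a.e.\ vanishing in the open element to vanishing on its boundary, is where the regularity of the data enters and is the main obstacle. The key point is that $\Ch^{1/2}:\nabla\vu$ is continuous up to $\overline e$: because $\vu\in U_e$ is a polynomial composed with the inverse of the polynomial map $\phi_e$, whose Jacobian satisfies $|\tJ_e|\geq|\tJ_e|_{min}>0$, the map $\phi_e$ is a diffeomorphism up to the boundary and $\nabla\vu$ extends continuously to $\overline e$, while $\Ch^{1/2}\in W^{1,\infty}(e)$ admits a continuous representative on $\overline e$. Hence $\Ch^{1/2}:\nabla\vu$ is continuous on $\overline e$, so vanishing almost everywhere in the open set $e$ forces it to vanish on all of $\overline e$, in particular on $\partial e$; this yields $a_{\partial e}^{(C*)}(\vu,\vu)=0$, and applying the quadratic-form characterization of the kernel once more gives $\vvu\in\mathrm{Ker}(A^*_{\partial e})$. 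I expect the only delicate points to be justifying these boundary values; alternatively one may bypass pointwise continuity by observing that $\Ch^{1/2}:\nabla\vu\in W^{1,\infty}(e)$ is the zero element when it vanishes a.e., so its trace on $\partial e$, through which the boundary integral is defined, vanishes as well.
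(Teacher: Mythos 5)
Your proposal is correct and follows essentially the same route as the paper: deduce from $a_e^{(C)}(\vu,\vu)=0$ that the stress field vanishes a.e.\ in $e$, then use the smoothness of $\vu$ up to $\overline e$ (polynomial pullback under $\phi_e$ with $|\tJ_e|\geq|\tJ_e|_{min}>0$) together with the $W^{1,\infty}$ regularity of the tensor and the trace theorem to conclude it vanishes on $\partial e$, hence the boundary form vanishes. The only cosmetic differences are that you phrase both kernels via the quadratic-form characterization for positive semidefinite matrices and work with $\Ch^{1/2}:\nabla\vu$, whereas the paper works with $\Ch:\nabla\vu$ and shows $a^{(C*)}_{\partial e}(\vu,\vw)=0$ for all $\vw\in U_e$ directly.
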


\begin{proof}
Let $\vvu\in\mathrm{Ker}(A_e)$, and define $\vu\in U_e$ as follows: $\vu := \sum_{i=1}^n \vvu_i\vw_i$. Then
\begin{align*}
0 &= \vvu^tA_e\vvu = a_e^{(C)}(\vu,\vu) = \int_e \| \Ch^{1/2}:\nabla\vu \|^2 \;dx.
\end{align*}
From this it follows that $\Ch:\nabla\vu = \tzero$ in $e$. Now let $\tilde\vu:=\vu\circ\phi_e$ be the reference function, with $\phi_e$ the reference-to-physical element mapping. Since  $\phi_e$ is assumed to be a polynomial function, such that $|\tJ_e| := |\mathrm{det}(\nabla\phi_e)| \geq |\tJ_e|_{min}> 0$ in $e$, for some constant $|\tJ_e|_{min}$, it follows that $\phi_e^{-1}\in C^{\infty}(e)^d$. Furthermore, since the reference function $\tilde\vu$ is also assumed to be polynomial, this implies that $\vu=\tilde\vu\circ\phi_e^{-1}\in C^{\infty}(e)^d$. Because $\Ch|_e\in W^{1,\infty}(e)^{d\times m\times m\times d}_{sym}$, it then follows from the trace theorem that $\Ch:\nabla\vu = \tzero$ is also satisfied on the boundary $\partial e$. This implies $a^{(C*)}_{\partial e}(\vu,\vw) = 0$ for any $\vw\in U_e$ and therefore $\vvu\in\mathrm{Ker}(A^*_{\partial e})$.
\end{proof}

Now let $k$ be the rank of $A_e$, and let $V_e\in\mathbb{R}^{n\times n}$ be a nonsingular matrix such that $V_e^tA_eV_e = D_e$, where $D_e\in\mathbb{R}^{n\times n}_{sym}$ is a diagonal matrix with the last $n-k$ entries being zero. Such a matrix decomposition can be obtained from, for example, a symmetric Gauss elimination procedure or a singular value decomposition. We then use these matrices to construct matrices $\tilde{D}_e, \tilde{A}^*_{\partial e}\in\mathbb{R}^{k\times k}_{sym}$ as follows:
\begin{subequations}
\begin{align}
[\tilde{D}_e]_{ij} &= [D_e]_{ij}, &&i,j=1,\dots,k, \\
[\tilde{A}^*_{\partial e}]_{ij} &= [V_e^tA^*_{\partial e}V_e]_{ij}, &&i,j=1,\dots,k.
\end{align}
\label{eq:defKappaDB}%
\end{subequations}
Using these matrices $\tilde{D}_e$ and $\tilde{A}^*_{\partial e}$ we can compute $\kappa_e^*$ and show that it is well defined.

\begin{lem}
Let $e\in\mathcal{T}_h$ be an arbitrary element, and let $\tilde{D}_e, \tilde{A}^*_{\partial e}$ be the matrices defined as in (\ref{eq:defKappaDB}). The constant $\kappa_e^*$ is well defined and satisfies
\begin{align}
\kappa_e^* = \lambda_{max}\big(\tilde{D}_e^{-1}\tilde{A}^*_{\partial e}\big),
\label{eq:propKappa}
\end{align}
where $\lambda_{max}\big(\tilde{D}_e^{-1}\tilde{A}^*_{\partial e}\big)$ denotes the largest eigenvalue of $\tilde{D}_e^{-1}\tilde{A}^*_{\partial e}$.
\label{lem:propKappa}
\end{lem}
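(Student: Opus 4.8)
The plan is to translate the functional supremum defining $\kappa_e^*$ into a matrix Rayleigh quotient and then reduce it to the nondegenerate $k\times k$ block, where Lemma~\ref{lem:largestEigValue} applies. Writing $\vu=\sum_{i=1}^n\vvu_i\vw_i$ and using bilinearity, I would first record that $a_e^{(C)}(\vu,\vu)=\vvu^tA_e\vvu$ and $a_{\partial e}^{(C*)}(\vu,\vu)=\vvu^tA^*_{\partial e}\vvu$, so that
\begin{align*}
\kappa_e^* = \sup_{\vvu\in\mathbb{R}^n,\;\vvu^tA_e\vvu\neq 0}\frac{\vvu^tA^*_{\partial e}\vvu}{\vvu^tA_e\vvu}.
\end{align*}

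Next I would apply the congruence substitution $\vvu=V_e\vvv$, which is a bijection of $\mathbb{R}^n$ since $V_e$ is nonsingular, giving $\vvu^tA_e\vvu=\vvv^tD_e\vvv$ and $\vvu^tA^*_{\partial e}\vvu=\vvv^t(V_e^tA^*_{\partial e}V_e)\vvv$. Because $D_e$ is diagonal with its last $n-k$ entries zero, the denominator collapses to $\tilde{\vvv}^t\tilde{D}_e\tilde{\vvv}$, where $\tilde{\vvv}:=(\vvv_1,\dots,\vvv_k)^t$ denotes the first $k$ components of $\vvv$. By Sylvester's law of inertia $D_e$ is positive semidefinite, being congruent to the positive semidefinite $A_e$, so its nonzero diagonal entries are strictly positive and $\tilde{D}_e$ is positive definite.

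The main obstacle is to show that the numerator likewise depends only on $\tilde{\vvv}$, i.e. that $V_e^tA^*_{\partial e}V_e=\mathrm{diag}(\tilde{A}^*_{\partial e},\tzero)$. Here I would exploit Lemma~\ref{lem:kerAB}: for each $j>k$ the $j$-th column $\vvu^{(j)}$ of $V_e$ satisfies $(\vvu^{(j)})^tA_e\vvu^{(j)}=[D_e]_{jj}=0$, and since $A_e$ is positive semidefinite this forces $A_e\vvu^{(j)}=\tzero$, hence $\vvu^{(j)}\in\mathrm{Ker}(A_e)\subset\mathrm{Ker}(A^*_{\partial e})$. Thus $A^*_{\partial e}\vvu^{(j)}=\tzero$ for every $j>k$, so the last $n-k$ columns of $A^*_{\partial e}V_e$ vanish; by symmetry of $A^*_{\partial e}$ the last $n-k$ rows and columns of $V_e^tA^*_{\partial e}V_e$ then vanish, which is exactly the claimed block structure and yields $\vvu^tA^*_{\partial e}\vvu=\tilde{\vvv}^t\tilde{A}^*_{\partial e}\tilde{\vvv}$.

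Finally, since $\tilde{D}_e$ is positive definite, the constraint $\vvu^tA_e\vvu\neq 0$ is equivalent to $\tilde{\vvv}\neq\tzero$, and the supremum reduces to
\begin{align*}
\kappa_e^* = \sup_{\tilde{\vvv}\in\mathbb{R}^k,\;\tilde{\vvv}\neq\tzero}\frac{\tilde{\vvv}^t\tilde{A}^*_{\partial e}\tilde{\vvv}}{\tilde{\vvv}^t\tilde{D}_e\tilde{\vvv}}.
\end{align*}
Applying Lemma~\ref{lem:largestEigValue} with the positive definite matrix $\tilde{D}_e$ identifies this quotient with $\lambda_{max}(\tilde{D}_e^{-1}\tilde{A}^*_{\partial e})$; since this is a finite eigenvalue, $\kappa_e^*$ is well defined and (\ref{eq:propKappa}) follows.
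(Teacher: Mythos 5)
Your proposal is correct and follows essentially the same route as the paper: the same substitution $\vvct{u}=V_e\vvct{v}$, the same use of Lemma~\ref{lem:kerAB} to show the last $n-k$ columns of $V_e$ kill $A^*_{\partial e}$ (your derivation via $(\vvct{u}^{(j)})^tA_e\vvct{u}^{(j)}=0$ and positive semidefiniteness is just a more explicit version of the paper's rank argument), the same reduction to the $k\times k$ Rayleigh quotient, and the same final appeal to Lemma~\ref{lem:largestEigValue}. The only point you should add is the paper's one-line observation that $\tilde{A}^*_{\partial e}$ is positive semidefinite (inherited from $A^*_{\partial e}$, since $a^{(C*)}_{\partial e}(\vu,\vu)\geq 0$): Lemma~\ref{lem:largestEigValue} equates the supremum of $|\tilde{\vvct{v}}^t\tilde{A}^*_{\partial e}\tilde{\vvct{v}}|/\tilde{\vvct{v}}^t\tilde{D}_e\tilde{\vvct{v}}$ with the largest eigenvalue \emph{in magnitude}, so positive semidefiniteness is what lets you drop the absolute value and identify this with the (signed) largest eigenvalue claimed in (\ref{eq:propKappa}).
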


\begin{proof}
First, consider the decomposition $V_e^tA_eV_e=D_e$ which was used to construct $\tilde{D}_e$ and $\tilde{A}^*_{\partial e}$. Since matrix $A_e$ has rank $k$ and the last $n-k$ entries of $D_e$ are zero, and since $V_e$ is nonsingular, this implies that the last $n-k$ columns of $V_e$ span the kernel of $A_e$. From Lemma \ref{lem:kerAB} it follows that these columns are also in the kernel of $A^*_{\partial e}$. Now let $\vvw\in\mathbb{R}^n$, and let $\tilde\vvw\in\mathbb{R}^k$ be the vector composed of the first $k$ entries of $\vvw$. We can then obtain the following relation:
\begin{align}
\vvw^tV_e^t A^*_{\partial e}V_e\vvw  &= \tilde{\vvw}^t\tilde{A}^*_{\partial e}\tilde{\vvw}.
\label{eq:propKappaB}
\end{align}
Since $A_e$ is positive semidefinite, it also follows that all entries of $\tilde{D}_e$ are strictly positive. Furthermore, since $A^*_{\partial e}$ is positive semidefinite, the matrix $\tilde{A}^*_{\partial e}$ will be positive semidefinite as well. Using these properties, we can prove (\ref{eq:propKappa}) as follows:
\begin{align*}
\kappa_e^* &:= \sup_{\vu\in U_e, \;a^{(C)}_{e}(\vu,\vu)\neq 0 } \frac{ a_{\partial e}^{(C*)} (\vu,\vu) }{ a_{e}^{(C)}(\vu,\vu) } 
= \sup_{\vvu\in\mathbb{R}^n, \vvu^tA_e\vvu\neq 0} \frac{\vvu^tA^*_{\partial e}\vvu}{\vvu^tA_e\vvu} \\
&= \sup_{\vvw\in\mathbb{R}^n, \vvw^tD_e\vvw\neq 0} \frac{\vvw^tV_e^tA^*_{\partial e}V_e\vvw}{\vvw^tD_e\vvw} 
= \sup_{\tilde{\vvw}\in\mathbb{R}^k, \tilde{\vvw}\neq \vvct{0}} \frac{\tilde{\vvw}^t\tilde{A}^*_{\partial e}\tilde{\vvw}}{\tilde{\vvw}^t\tilde{D}_e\tilde{\vvw}} 
= \lambda_{max}\big(\tilde{D}_e^{-1}\tilde{A}^*_{\partial e}\big).
\end{align*}
In the third step we substituted $\vvu$ by $V_e\vvw$, in the fourth step we used (\ref{eq:propKappaB}), and in the last step we used Lemma \ref{lem:largestEigValue} combined with the fact that $\tilde{D}_e$ is positive definite and $\tilde{A}^*_{\partial e}$ is positive semidefinite.
\end{proof}
\begin{rem} A symmetric Gauss elimination procedure or a singular value decomposition algorithm usually does not give the exact decomposition $V_e^tA_eV_e=D_e$, but only a numerical approximation. The diagonal entries of $D_e$ are then considered to be $0$ when they are smaller than a given tolerance.
\end{rem}
\begin{rem} The largest eigenvalue $\lambda_{max} \big(\tilde{D}_e^{-1}\tilde{A}^*_{\partial e}\big)$ can be efficiently obtained using a power iteration method.
\end{rem}

We can now derive the following sufficient estimate for the penalty term.

\begin{thm}
Let $e\in\mathcal{T}_{h}$ be an arbitrary element, and let $c_{\kappa}\geq1$ be an arbitrary constant. If $\eta_{e} \geq c_{\kappa}\kappa^*_{e}$, then $a_{e}(\vu,\vu) \geq 0$ for all $\vu\in U_h$. Moreover, if $c_{\kappa}>1$, then
\begin{align}
a_{e}(\vu,\vu) &\geq c_{coer} | \vu |^2_{1,e}, &&\vu\in U_h,
\label{eq:coercivity1}
\end{align}
where
\begin{align*}
c_{coer} &:= \sup_{x\in[1,c_{\kappa}]} \min\left\{ 1-x^{-1}, \frac{c_{\kappa} - x}{c_{\kappa}} \right\} > 0.
\end{align*}
\label{thm:condPen1}
\end{thm}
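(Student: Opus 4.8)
The plan is to combine the two ingredients assembled just above the statement: the Cauchy-type inequality of Lemma \ref{lem:ineqA2}, which bounds $|2a^{(DG)}_{\partial e}(\vu,\vu)|$ by $c^{-1}a^{(C*)}_{\partial e}(\vu,\vu)+ca^{(IP)}_{\partial e}(\vu,\vu)$ for any $c>0$, together with the defining inequality $a^{(C*)}_{\partial e}(\vu,\vu)\leq\kappa^*_e\,a^{(C)}_e(\vu,\vu)$. First I would record the two identities already noted in the text, namely $a_e(\vu,\vu)=a^{(C)}_e(\vu,\vu)+2a^{(DG)}_{\partial e}(\vu,\vu)+\eta_e a^{(IP)}_{\partial e}(\vu,\vu)$ and $|\vu|^2_{1,e}=a^{(C)}_e(\vu,\vu)+\eta_e a^{(IP)}_{\partial e}(\vu,\vu)$. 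Bounding the cross term from below through Lemma \ref{lem:ineqA2} and then using $\kappa^*_e$ to absorb the $C*$ term gives, for every $c>0$,
\[
a_e(\vu,\vu)\geq\big(1-c^{-1}\kappa^*_e\big)a^{(C)}_e(\vu,\vu)+(\eta_e-c)\,a^{(IP)}_{\partial e}(\vu,\vu).
\]
Both $a^{(C)}_e(\vu,\vu)$ and $a^{(IP)}_{\partial e}(\vu,\vu)$ are nonnegative (the matrices $A_e,A^*_{\partial e}$ are positive semidefinite, and $a^{(IP)}_{\partial e}$ is manifestly a sum of squares), so everything is governed by the signs and sizes of the two bracketed coefficients.

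For the nonnegativity claim I would simply take $c=\kappa^*_e$ (assuming $\kappa^*_e>0$; the case $\kappa^*_e=0$ is dispatched separately below). The first coefficient then vanishes and the second equals $\eta_e-\kappa^*_e\geq(c_\kappa-1)\kappa^*_e\geq0$, since $\eta_e\geq c_\kappa\kappa^*_e$ and $c_\kappa\geq1$; hence $a_e(\vu,\vu)\geq0$.

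For the coercivity estimate I would reparametrize $c=x\kappa^*_e$ with $x\in[1,c_\kappa]$. The first coefficient becomes exactly $1-x^{-1}$. For the second, the hypothesis $\eta_e\geq c_\kappa\kappa^*_e$ gives $x\kappa^*_e\leq(x/c_\kappa)\eta_e$, so that $\eta_e-x\kappa^*_e\geq\frac{c_\kappa-x}{c_\kappa}\eta_e$. Substituting, and bounding each coefficient below by the minimum of the two — which is legitimate because both $a^{(C)}_e(\vu,\vu)$ and $\eta_e a^{(IP)}_{\partial e}(\vu,\vu)$ are nonnegative — yields
\[
a_e(\vu,\vu)\geq\min\Big\{1-x^{-1},\tfrac{c_\kappa-x}{c_\kappa}\Big\}\,|\vu|^2_{1,e}.
\]
Optimizing over $x\in[1,c_\kappa]$ produces exactly the constant $c_{coer}$. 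Positivity when $c_\kappa>1$ follows by inspecting the open interval: for every $x\in(1,c_\kappa)$ one has $1-x^{-1}>0$ and $\frac{c_\kappa-x}{c_\kappa}>0$ simultaneously, so the minimum is strictly positive there and hence so is the supremum.

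The only genuinely delicate point is the bookkeeping around the free parameter $c$: it must be large enough ($c\geq\kappa^*_e$) to keep the $C$-coefficient nonnegative yet small enough ($c\leq\eta_e$) to keep the penalty coefficient nonnegative, and the sharp trade-off between these two competing requirements is precisely the one-parameter maximization defining $c_{coer}$. I would also treat the degenerate case $\kappa^*_e=0$ at the outset: there $a^{(C*)}_{\partial e}(\vu,\vu)=0$ for all $\vu$, so Lemma \ref{lem:ineqA2} forces $|2a^{(DG)}_{\partial e}(\vu,\vu)|\leq c\,a^{(IP)}_{\partial e}(\vu,\vu)$ for all $c>0$ and hence $a^{(DG)}_{\partial e}(\vu,\vu)=0$; then $a_e(\vu,\vu)=|\vu|^2_{1,e}$, which is both $\geq0$ and $\geq c_{coer}|\vu|^2_{1,e}$ because $c_{coer}\leq1$.
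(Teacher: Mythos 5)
Your proof is correct and follows essentially the same route as the paper: Lemma \ref{lem:ineqA2} with $c=x\kappa^*_e$, absorption of the $a^{(C*)}_{\partial e}$ term via the definition of $\kappa^*_e$, and optimization over $x\in[1,c_{\kappa}]$. Two of your details are in fact tighter than the printed proof: the paper bounds $\eta_e-x\kappa^*_e$ below by $(c_{\kappa}-x)\kappa^*_e$ and then closes the argument with the identity $|\vu|^2_{1,e}=a^{(C)}_{e}(\vu,\vu)+c_{\kappa}\kappa^*_{e}a^{(DG)}_{\partial e}(\vu,\vu)$, which (besides the evident $a^{(DG)}$-for-$a^{(IP)}$ typo) holds only when $\eta_e=c_{\kappa}\kappa^*_e$ exactly, whereas your bound $\eta_e-x\kappa^*_e\geq\frac{c_{\kappa}-x}{c_{\kappa}}\eta_e$ closes against the actual seminorm $|\vu|^2_{1,e}=a^{(C)}_{e}(\vu,\vu)+\eta_e a^{(IP)}_{\partial e}(\vu,\vu)$ for every admissible $\eta_e\geq c_{\kappa}\kappa^*_e$; and your separate treatment of the degenerate case $\kappa^*_e=0$ covers the situation in which the paper's choice $c=x\kappa^*_e$ (and its factor $(\kappa^*_e)^{-1}$) is inadmissible.
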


\begin{proof} Take an arbitrary function $\vu\in U_h$ and scalar $x\in[1,c_{\kappa}]$. We can then derive the following inequality:
\begin{align*}
a_{e}(\vu,\vu) =&\; a^{(C)}_{e}(\vu,\vu) + 2a^{(DG)}_{\partial e}(\vu,\vu) + \eta_{e}a^{(IP)}_{\partial e}(\vu, \vu) \\
\geq&\; a^{(C)}_{e}(\vu,\vu) - x^{-1}(\kappa^*_{e})^{-1}a^{(C*)}_{\partial e}(\vu,\vu) + (\eta_{e}-x\kappa^*_{e})a^{(IP)}_{\partial e}(\vu, \vu) \\
\geq &\;  (1-x^{-1})a^{(C)}_{e}(\vu,\vu) +  (c_{\kappa}-x)\kappa^*_{e}a^{(IP)}_{\partial e}(\vu,\vu).
\end{align*}
In the second line we used Lemma \ref{lem:ineqA2} with $c=x\kappa^*_{e}$, and in the last line we used the definition of $\kappa^*_{e}$. Now note that we can write $| \vu |^2_{1,e} = a^{(C)}_{e}(\vu,\vu) + c_{\kappa}\kappa^*_{e}a^{(DG)}_{\partial e}(\vu,\vu)$. Combining this with the inequality above gives
\begin{align*}
a_{e}(\vu,\vu) \geq \min\left\{ 1-x^{-1}, \frac{c_{\kappa} - x}{c_{\kappa}} \right\} | \vu |^2_{1,e} \geq 0.
\end{align*}
Taking the supremum over all $x\in[1,c_{\kappa}]$ results in (\ref{eq:coercivity1}).
\end{proof}

The penalty term estimate depends on the constant $\kappa^*_{e}$. However, this constant does not include any effects of the normal vector on the positivity of the bilinear operator, which may cause the penalty term estimate to be less sharp. Therefore, we consider an additional penalty term estimate which does include the effect of the normal vector, and is shown to be considerably sharper in Section \ref{sec:numResults}. To do this, we first define the tensor field $\ten{c}_{\vn}\in \bigotimes_{e\in\mathcal{T}_h}L^{\infty}(\partial e)^{m\times m}_{sym}$ as follows:
\begin{align*}
\ten{c}_{\vn} |_{\partial e} &:= (\vn\cdot \Ch \cdot\vn)|_{\partial e}, && e\in\mathcal{T}_h.
\end{align*}
where $\vn|_{\partial e}$ is the outward pointing normal vector of element $e$. We also define the following function space:
\begin{align*}
\hat U_h := \Big\{ \hat\vu\in\bigotimes_{e\in\mathcal{T}_h}L^2(\partial e)^m \;\Big|\; &\hat\vu|_{\partial e} = (\vn\cdot \Ch:\nabla\vu)|_{\partial e}, \\
&\text{ for some }u\in U_h, \text{ for all }e\in\mathcal{T}_h \Big\}.
\end{align*}
Lemma \ref{lem:CnInv} shows that there exists a pseudoinverse $\ten{c}_{\vn}^{-1}\in\bigotimes_{e\in\mathcal{T}_h}L^{\infty}(\partial e)^{m\times m}_{sym}$ such that $\ten{c}_{\vn}^{-1} \cdot \ten{c}_{\vn}\cdot\hat\vu = \ten{c}_{\vn} \cdot \ten{c}_{\vn}^{-1}\cdot\hat\vu = \hat\vu$ for all $\hat\vu \in \hat U_h$. We use this tensor field to define an alternative auxiliary bilinear operator $a^{(C**)}_{\partial e}:U_h\times U_h\rightarrow\mathbb{R}$ as follows:
\begin{align*}
a^{(C**)}_{\partial e}(\vu,\vw) &:=  \int_{\partial e} (\vn\cdot \Ch:\nabla\vu)^t \cdot \nu_h^{-1}\ten{c}_{\vn}^{-1} \cdot (\vn\cdot \Ch:\nabla\vw\big) \;ds.
\end{align*} 
The penalty term estimate and the coercivity result are obtained in the same way as before, except that we now use $a^{(C**)}_{\partial e}$ instead of $a^{(C*)}_{\partial e}$. We start again by deriving a bound on $a^{(DG)}_{\partial e}$:

\begin{lem}
Consider an arbitrary element $e\in\mathcal{T}_{h}$, and let $c>0$ be an arbitrary positive constant. Then the following inequality holds:
\begin{align}
|2a^{(DG)}_{\partial e}(\vu,\vu)| &\leq c^{-1} a^{(C**)}_{\partial e}(\vu,\vu) + ca^{(IP)}_{\partial e}(\vu,\vu), &&\vu\in U.
\label{eq:ineqA2_2}
\end{align}
\label{lem:ineqA2_2}
\end{lem}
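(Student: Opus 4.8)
The plan is to mirror the proof of Lemma~\ref{lem:ineqA2}, but to contract the normal vector into the material tensor at the very start, so that only the normal flux $\vn\cdot\Ch:\nabla\vu$ enters the bound rather than the full gradient flux $\Ch^{1/2}:\nabla\vu$. The first step is a purely algebraic rewriting of the three integrands. Writing $\hat\vu := \vn\cdot\Ch:\nabla\vu$ and recalling $\ten{c}_{\vn}=\vn\cdot\Ch\cdot\vn$, I would verify by direct index manipulation the pointwise identities
\begin{align*}
(\vu^*-\vu)\vn:\Ch:\nabla\vu &= \big((\vu^*-\vu),\, \hat\vu\big), \\
(\vu^*-\vu)\vn:\nu_h\Ch:\vn(\vu^*-\vu) &= \nu_h\big((\vu^*-\vu),\, \ten{c}_{\vn}\cdot(\vu^*-\vu)\big),
\end{align*}
where $(\cdot,\cdot)$ denotes the $\mathbb{R}^m$ inner product. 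The second identity rests on the contraction $\vn\cdot\Ch:\big(\vn(\vu^*-\vu)\big)=\ten{c}_{\vn}\cdot(\vu^*-\vu)$, which follows directly from the definition of $\ten{c}_{\vn}$. Together with the definition of $a^{(C**)}_{\partial e}$, these give $2a^{(DG)}_{\partial e}(\vu,\vu)=\int_{\partial e}2\big((\vu^*-\vu),\hat\vu\big)\,ds$, $a^{(IP)}_{\partial e}(\vu,\vu)=\int_{\partial e}\nu_h\big((\vu^*-\vu),\ten{c}_{\vn}\cdot(\vu^*-\vu)\big)\,ds$, and $a^{(C**)}_{\partial e}(\vu,\vu)=\int_{\partial e}\nu_h^{-1}\big(\hat\vu,\ten{c}_{\vn}^{-1}\cdot\hat\vu\big)\,ds$.

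With the three forms aligned, the second step is a weighted Cauchy--Schwarz estimate of the integrand $2\big((\vu^*-\vu),\hat\vu\big)$ against the semi-inner product induced by the positive semidefinite matrix $\ten{c}_{\vn}$. The crucial observation is that $\hat\vu|_{\partial e}\in\hat U_h$, so by Lemma~\ref{lem:CnInv} the pseudoinverse satisfies $\ten{c}_{\vn}\cdot\ten{c}_{\vn}^{-1}\cdot\hat\vu=\hat\vu$; that is, $\hat\vu$ lies in the range of $\ten{c}_{\vn}$. Setting $\vr:=\ten{c}_{\vn}^{-1}\cdot\hat\vu$ and applying the Cauchy--Schwarz inequality for the semi-inner product $(\cdot,\ten{c}_{\vn}\cdot)$ to the pair $(\vu^*-\vu)$ and $\vr$ then yields, using $\ten{c}_{\vn}\cdot\vr=\hat\vu$ and $(\vr,\ten{c}_{\vn}\cdot\vr)=(\hat\vu,\ten{c}_{\vn}^{-1}\cdot\hat\vu)$, the pointwise bound
\begin{align*}
\big|\big((\vu^*-\vu),\hat\vu\big)\big|^2 \leq \big((\vu^*-\vu),\ten{c}_{\vn}\cdot(\vu^*-\vu)\big)\,\big(\hat\vu,\ten{c}_{\vn}^{-1}\cdot\hat\vu\big).
\end{align*}
Inserting the scalar weights $\nu_h^{1/2}$ and $\nu_h^{-1/2}$ and finishing with the Cauchy (Young) inequality $2\sqrt{AB}\le cA+c^{-1}B$ splits the estimate into the $a^{(IP)}_{\partial e}$ and $a^{(C**)}_{\partial e}$ contributions; integrating over $\partial e$ delivers (\ref{eq:ineqA2_2}). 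Equivalently, one may introduce a square root $\ten{c}_{\vn}^{1/2}$ on the range of $\ten{c}_{\vn}$ and write the integrand as $2\big(c^{1/2}\nu_h^{1/2}\ten{c}_{\vn}^{1/2}\cdot(\vu^*-\vu),\, c^{-1/2}\nu_h^{-1/2}\ten{c}_{\vn}^{-1/2}\cdot\hat\vu\big)$, exactly paralleling the displayed line in the proof of Lemma~\ref{lem:ineqA2}.

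I expect the only genuine obstacle to be the justification of this weighted Cauchy--Schwarz step when $\ten{c}_{\vn}$ is merely semidefinite: both the inequality and the identity $\ten{c}_{\vn}\cdot\ten{c}_{\vn}^{-1}\cdot\hat\vu=\hat\vu$ fail for general vectors and hold only because $\hat\vu$ lies in the range of $\ten{c}_{\vn}$, which is precisely the content of the membership $\hat\vu\in\hat U_h$ together with Lemma~\ref{lem:CnInv}. The tensor-contraction identities of the first step are routine, but must be checked with care, since the placement of the transpose and of the normal vector is what decides whether the reduced normal flux $\hat\vu$ or the full flux appears in the estimate.
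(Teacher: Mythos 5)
Your proposal is correct and follows essentially the same route as the paper's proof: both rewrite $2a^{(DG)}_{\partial e}(\vu,\vu)$ in terms of the normal flux $\hat\vu=\vn\cdot\Ch:\nabla\vu$, invoke Lemma \ref{lem:CnInv} so that the pseudoinverse (equivalently, the square roots $\ten{c}_{\vn}^{\pm 1/2}$) acts as a genuine inverse on $\hat U_h$, and conclude with Cauchy--Schwarz plus Young's inequality. Your use of the semi-inner product induced by $\ten{c}_{\vn}$ in place of the paper's explicit insertion of $\ten{c}_{\vn}^{1/2}\cdot\ten{c}_{\vn}^{-1/2}$ is only a cosmetic repackaging of the same argument, as you yourself note in the final sentence of your second paragraph.
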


\begin{proof}
From Lemma \ref{lem:CnInv} it follows that $\ten{c}_{\vn}$ and $\ten{c}_{\vn}^{-1}$ are positive semidefinite tensor fields, and therefore there exist symmetric positive semidefinite tensor fields $\ten{c}_{\vn}^{1/2}, \ten{c}_{\vn}^{-1/2}\in\bigotimes_{e\in\mathcal{T}_h}L^{\infty}(\partial e)^{m\times m}_{sym}$ such that $\ten{c}_{\vn}^{1/2} \cdot \ten{c}_{\vn}^{1/2} = \ten{c}_{\vn}$ and $\ten{c}_{\vn}^{-1/2} \cdot \ten{c}_{\vn}^{-1/2} = \ten{c}_{\vn}^{-1}$, and such that $\ten{c}_{\vn}^{-1/2} \cdot \ten{c}_{\vn}^{1/2} \cdot \hat\vu = \ten{c}_{\vn}^{1/2} \cdot \ten{c}_{\vn}^{-1/2} \cdot \hat\vu = \hat\vu$ for all $\hat\vu\in \hat U_h$.

Now take an arbitrary function $u\in U_h$, and define the function $\hat\vu\in \hat U_h$ as follows:
\begin{align*}
\hat\vu|_{\partial e} &:= (\vn\cdot \Ch:\nabla\vu)|_{\partial e}, && e\in\mathcal{T}_h.
\end{align*}
We can then write
\begin{align*}
2a^{(DG)}_{\partial e}(\vu,\vu) &= \int_{\partial e} 2\Big( c^{1/2}\nu_h^{1/2}(\vu^*-\vu) , \;c^{-1/2}\nu_h^{-1/2}\hat\vu \Big) \;ds \\
  &=\int_{\partial e} 2\Big( c^{1/2}\nu_h^{1/2}(\vu^*-\vu) , \;c^{-1/2}\nu_h^{-1/2}\ten{c}_{\vn}^{1/2}\cdot\ten{c}_{\vn}^{-1/2}\cdot\hat\vu \Big) \;ds \\
  &= \int_{\partial e} 2\Big( c^{1/2}\nu_h^{1/2}\ten{c}_{\vn}^{1/2}\cdot(\vu^*-\vu) , \;c^{-1/2}\nu_h^{-1/2}\ten{c}_{\vn}^{-1/2}\cdot\hat\vu \Big) \;ds.
\end{align*}
Using the Cauchy--Schwarz and the Cauchy inequalities, we can then obtain
\begin{align*}
\big| 2a^{(DG)}_{\partial e}(\vu,\vu) \big| \leq&\;  c\int_{\partial e} \big\| \nu_h^{1/2}\ten{c}_{\vn}^{1/2} \cdot (\vu^*-\vu)  \big\|^2 \;ds  \\
&\; + c^{-1}  \int_{\partial e} \big\| \nu_h^{-1/2}\ten{c}_{\vn}^{-1/2}\cdot\hat\vu \big\|^2 \;ds \\
=&\; c^{-1} a^{(C**)}_{\partial e}(\vu,\vu) + ca^{(IP)}_{\partial e}(\vu,\vu).
\end{align*}
\end{proof}

We now use the bilinear operator $a_{\partial e}^{(C**)}$ to construct the following constant:
\begin{align*}
\kappa_{e}^{**} &:= \sup_{\vu\in U_e, \;a^{(C)}_{e}(\vu,\vu)\neq 0 } \frac{ a_{\partial e}^{(C**)} (\vu,\vu) }{ a_{e}^{(C)}(\vu,\vu) }.
\end{align*}
The proof of the existence of this constant and the way to compute it is analogous to $\kappa_e^*$. In a similar way as before we can use this constant to obtain the following sufficient penalty term estimate.

\begin{thm}
Let $e\in\mathcal{T}_{h}$ be an arbitrary element, and let $c_{\kappa}\geq1$ be an arbitrary constant. If $\eta_{e} \geq c_{\kappa}\kappa^{**}_{e}$, then $a_{e}(\vu,\vu) \geq 0$ for all $\vu\in U_h$. Moreover, if $c_{\kappa}>1$, then 
\begin{align}
a_{e}(\vu,\vu) &\geq c_{coer} | \vu |^2_{1,e}, &&\vu\in U_h,
\label{eq:coercivity2}
\end{align}
where
\begin{align*}
c_{coer} &:= \sup_{x\in[1,c_{\kappa}]} \min\left\{ 1-x^{-1}, \frac{c_{\kappa} - x}{c_{\kappa}} \right\}.
\end{align*}
\label{thm:condPen2}
\end{thm}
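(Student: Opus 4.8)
The plan is to repeat the argument of Theorem~\ref{thm:condPen1} almost verbatim, replacing the single-starred objects $a^{(C*)}_{\partial e}$, $\kappa^*_e$, and Lemma~\ref{lem:ineqA2} throughout by their double-starred counterparts $a^{(C**)}_{\partial e}$, $\kappa^{**}_e$, and Lemma~\ref{lem:ineqA2_2}. Concretely, I would fix an arbitrary $\vu\in U_h$ and a scalar $x\in[1,c_{\kappa}]$ and expand $a_e(\vu,\vu)=a^{(C)}_e(\vu,\vu)+2a^{(DG)}_{\partial e}(\vu,\vu)+\eta_e a^{(IP)}_{\partial e}(\vu,\vu)$. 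Applying Lemma~\ref{lem:ineqA2_2} with the choice $c=x\kappa^{**}_e$ to bound the cross term from below then gives
\[ a_e(\vu,\vu)\geq a^{(C)}_e(\vu,\vu)-x^{-1}(\kappa^{**}_e)^{-1}a^{(C**)}_{\partial e}(\vu,\vu)+(\eta_e-x\kappa^{**}_e)a^{(IP)}_{\partial e}(\vu,\vu). \]

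Next I would invoke the defining inequality $a^{(C**)}_{\partial e}(\vu,\vu)\leq\kappa^{**}_e a^{(C)}_e(\vu,\vu)$, valid for every $\vu\in U_h$ (the degenerate case $a^{(C)}_e(\vu,\vu)=0$ being addressed below), together with the hypotheses $\eta_e\geq c_{\kappa}\kappa^{**}_e$ and $x\leq c_{\kappa}$, to reduce the previous line to
\[ a_e(\vu,\vu)\geq(1-x^{-1})a^{(C)}_e(\vu,\vu)+(c_{\kappa}-x)\kappa^{**}_e a^{(IP)}_{\partial e}(\vu,\vu). \]
Since $x\in[1,c_{\kappa}]$ and $a^{(IP)}_{\partial e}(\vu,\vu)\geq0$, both coefficients are nonnegative, which already proves $a_e(\vu,\vu)\geq0$. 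For the coercivity bound I would recall $|\vu|^2_{1,e}=a^{(C)}_e(\vu,\vu)+\eta_e a^{(IP)}_{\partial e}(\vu,\vu)$ and rewrite $(\eta_e-x\kappa^{**}_e)a^{(IP)}_{\partial e}(\vu,\vu)$ as $\tfrac{\eta_e-x\kappa^{**}_e}{\eta_e}\,\eta_e a^{(IP)}_{\partial e}(\vu,\vu)$, using $\eta_e\geq c_{\kappa}\kappa^{**}_e$ to estimate $\tfrac{\eta_e-x\kappa^{**}_e}{\eta_e}\geq\tfrac{c_{\kappa}-x}{c_{\kappa}}$. Estimating both coefficients from below by their minimum then yields $a_e(\vu,\vu)\geq\min\{1-x^{-1},(c_{\kappa}-x)/c_{\kappa}\}\,|\vu|^2_{1,e}$, and taking the supremum over $x\in[1,c_{\kappa}]$ gives (\ref{eq:coercivity2}). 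That $c_{coer}>0$ when $c_{\kappa}>1$ follows because $1-x^{-1}$ and $(c_{\kappa}-x)/c_{\kappa}$ are each strictly positive on the open interval $(1,c_{\kappa})$ and cross inside it, so their minimum is positive at the crossing point.

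The only step that is not a literal transcription, and the one I expect to need the most care, is the claim that $a^{(C**)}_{\partial e}(\vu,\vu)\leq\kappa^{**}_e a^{(C)}_e(\vu,\vu)$ holds for \emph{all} $\vu\in U_h$ rather than merely for those with $a^{(C)}_e(\vu,\vu)\neq0$. This reduces to the analogue of Lemma~\ref{lem:kerAB}, namely that $a^{(C)}_e(\vu,\vu)=0$ forces $a^{(C**)}_{\partial e}(\vu,\vu)=0$, which follows from the same trace argument: $a^{(C)}_e(\vu,\vu)=0$ gives $\Ch:\nabla\vu=\tzero$ in $e$, hence $\hat\vu=\vn\cdot\Ch:\nabla\vu=\tzero$ on $\partial e$, so the integrand of $a^{(C**)}_{\partial e}(\vu,\vu)$ vanishes. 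Combined with the positive semidefiniteness of $\ten{c}_{\vn}^{-1}$ from Lemma~\ref{lem:CnInv}, which makes $a^{(C**)}_{\partial e}$ well defined and lets $\kappa^{**}_e$ be shown to exist and be computable exactly as in Lemmas~\ref{lem:kerAB}--\ref{lem:propKappa}, this closes the argument.
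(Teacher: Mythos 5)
Your proposal is correct and takes essentially the same route as the paper, whose entire proof of this theorem is the remark that it is ``analogous to that of Theorem~\ref{thm:condPen1}'': one reruns that argument with $a^{(C**)}_{\partial e}$, $\kappa^{**}_{e}$, and Lemma~\ref{lem:ineqA2_2} replacing their single-starred counterparts. Your two points of extra care---justifying $a^{(C**)}_{\partial e}(\vu,\vu)\leq\kappa^{**}_{e}a^{(C)}_{e}(\vu,\vu)$ in the degenerate case $a^{(C)}_{e}(\vu,\vu)=0$ via the kernel/trace argument of Lemma~\ref{lem:kerAB}, and the estimate $(\eta_{e}-x\kappa^{**}_{e})/\eta_{e}\geq(c_{\kappa}-x)/c_{\kappa}$ so that the bound holds when $\eta_{e}$ strictly exceeds $c_{\kappa}\kappa^{**}_{e}$---are both valid and in fact tidy up minor glosses in the paper's own write-up of Theorem~\ref{thm:condPen1}.
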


\begin{proof} The proof is analogous to that of Theorem \ref{thm:condPen1}.
\end{proof}

We have now derived conditions for the penalty term to ensure that $a_{h}$ is positive semidefinite and showed how the penalty term can be computed. In the next section we will derive time step estimates to ensure that the local time-stepping scheme is stable.

\section{Sufficient time step estimates}
\label{sec:timeStepEst}
We start by rewriting the DG method as a linear system of ordinary differential equations. We then show how we can obtain sufficient upper bounds for the spectral radius of $M^{-1}A$, and therefore sufficient lower bounds for the time step size for a large class of explicit time integration schemes, by splitting the mass mass matrix $M$ and stiffness matrix $A$ into multiple parts. Finally, we introduce weighted mesh decompositions to explain how this splitting of matrices can be done efficiently. 

\subsection{A system of ordinary differential equations}
Let $\{\vw_i\}_{i=1}^{N}$ be a linear basis of $U_h$ and define, for $\vu\in U_h$, the vector $\vvct u\in\mathbb{R}^{N}$ such that $\vu = \sum_{i=1}^{N} \vvct u_i\vw_i$. We can rewrite the DG method, given in (\ref{eq:DGmethod}), as the following system of ordinary differential equations: we solve $\vvu:[0,T]\rightarrow\mathbb{R}^N$, such that
\begin{subequations}
\begin{align}
M_h\partial_t^2\vvct u + A_h\vvct u &= \vvct f^*_h, &&t\in[0,T], \\
\vvu|_{t=0} &= \vvu_{0,h} := M_h^{-1}\vvct{u}_{0,h}^*, && \\
\partial_t\vvu|_{t=0} &= \vvct v_{0,h} := M_h^{-1}\vvct{v}_{0,h}^*, &&
\end{align}
\label{eq:ODEsystem}%
\end{subequations}
where $M_h, A_h\in\mathbb{R}^{N\times N}$ are matrices,  $\vvct{u}_{0,h}^*, \vvct{v}_{0,h}^*\in\mathbb{R}^N$ are vectors, and $ \vvct f_h^*:[0,T]\rightarrow\mathbb{R}^N$ is a vector function, defined as follows:
\begin{align*}
[M_h]_{ij} &:= (\rho\vw_i,\vw_j), &&i,j=1,\dots,N,\\
[A_h]_{ij} &:= a_h(\vw_i,\vw_j), &&i,j=1,\dots,N, \\
[\vvct{u}_{0,h}^*]_i &:= (\rho\vu_{0}, \vw_i), &&i=1,\dots,N, \\
[\vvct{v}_{0,h}^*]_i &:= (\rho \vv_{0}, \vw_i), &&i=1,\dots,N, \\
[\vvct f_h^*(t)]_i &:= (\vf(t), \vw_i), && i=1,\dots,N, \;t\in[0,T].
\end{align*}
For a large class of explicit time integrators, including Lax--Wendroff schemes and explicit Runge--Kutta schemes, the time step size condition is of the form
\begin{align}
\Delta t \leq \frac{c_{method}}{\sqrt{\lambda_{max}(M^{-1}A)}},
\label{eq:timeStepCond1}
\end{align} 
where $c_{method} > 0$ is a constant, depending only on the type of time integration method, and $\lambda_{max}(M^{-1}A)$ is the largest eigenvalue of $M^{-1}A$, which is also known as the spectral radius of $M^{-1}A$. For example, the stability condition for the leap--frog scheme is well known to be (\ref{eq:timeStepCond1}) with $c_{method}=2$. Because of the form of (\ref{eq:timeStepCond1}), it remains to find an upper estimate for the spectral radius. In the next section we show how this can be done by splitting the matrices $M$ and $A$ into multiple parts.

\subsection{Spectral radius estimates by splitting matrices}
In order to obtain a bound for the spectral radius we first introduce the mapping $\mathcal{I}:\mathbb{R}^{N\times N}_{sym}\rightarrow\mathbb{R}^{N\times N}_{sym}$, which maps a symmetric matrix to a diagonal matrix with entries $0$ and $1$ to indicate the nonzero rows or columns of the input matrix:
\begin{align*}
[\mathcal{I}(S)]_{ij} &= \begin{cases}
1 & i=j, \text{ and } S_{ik}\neq 0 \text{ for somes }k\in\{1,..,N\}, \\
0 & \text{otherwise}.
\end{cases} 
\end{align*}
We also define $\mathcal{I}^*(S)$ as the matrix $\mathcal{I}(S)$ with all zero-columns removed. Using these definitions we can formulate the following theorem.

\begin{thm}
Let $M\in\mathbb{R}^{N\times N}_{sym}$ be a symmetric positive definite matrix and $A\in\mathbb{R}^{N\times N}_{sym}$ a symmetric positive semidefinite matrix. Also let $M_{(i)}, A_{(i)} \in\mathbb{R}^{N\times N}_{sym}$ for $i=1,\dots,n$, be symmetric matrices such that
\begin{subequations}
\begin{align}
\sum_{i=1}^n M_{(i)} &= M, \qquad \sum_{i=1}^n A_{(i)} = A, &&\\
M_{(i)}' &\succ 0, \qquad I_{(i)}A_{(i)}I_{(i)} = A_{(i)}, && i=1,\dots,n, 
\end{align}
\label{eq:matrixDec}%
\end{subequations}
where $M_{(i)}':=(I_{(i)}^*)^tM_{(i)}I^*_{(i)}$, $I_{(i)} :=\mathcal{I}(M_{(i)})$ and $I_{(i)}^* :=\mathcal{I}^*(M_{(i)})$. Then
\begin{align}
\lambda_{max}(M^{-1}A) &\leq \max_{i=1,\dots,n} \lambda_{max}\big((M_{(i)}')^{-1}A_{(i)}'\big),
\label{eq:spectralRadius}
\end{align}
where $A_{(i)}':=(I^*_{(i)})^tA_{(i)}I^*_{(i)}$, and $\lambda_{max}(\cdot)$ denotes the largest eigenvalue in magnitude.
\label{thm:spectralRadius}
\end{thm}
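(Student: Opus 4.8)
The plan is to reduce the global bound to a collection of local generalized-eigenvalue bounds, one for each pair $(M_{(i)},A_{(i)})$, and then recombine them by summing Rayleigh quotients. Since $M\succ0$ and $A$ is symmetric positive semidefinite, the matrix $M^{-1}A$ is similar to the symmetric positive semidefinite matrix $M^{-1/2}AM^{-1/2}$, so its eigenvalues are real and nonnegative and its spectral radius is the supremum of a Rayleigh quotient,
\[
\lambda_{max}(M^{-1}A) = \sup_{x\in\mathbb{R}^N,\,x\neq 0} \frac{x^tAx}{x^tMx}.
\]
Writing $\lambda^* := \max_{i=1,\dots,n}\lambda_{max}\big((M_{(i)}')^{-1}A_{(i)}'\big)$, it therefore suffices to prove the single quadratic-form inequality $x^tAx \leq \lambda^*\, x^tMx$ for every $x\in\mathbb{R}^N$. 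Because $A=\sum_i A_{(i)}$ and $M=\sum_i M_{(i)}$, this follows by summation once I establish the elementwise estimate $x^tA_{(i)}x \leq \lambda^*\, x^tM_{(i)}x$ for each $i$.

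The key step is to rewrite each $M_{(i)}$ and $A_{(i)}$ through their restriction to the support of $M_{(i)}$. Using that $I_{(i)}^*(I_{(i)}^*)^t = I_{(i)}$ and $(I_{(i)}^*)^tI_{(i)}^* = \mathrm{Id}$, that $M_{(i)}$ vanishes outside its own support, and that $A_{(i)}=I_{(i)}A_{(i)}I_{(i)}$ by hypothesis, I would verify the factorizations
\[
M_{(i)} = I_{(i)}^* M_{(i)}' (I_{(i)}^*)^t, \qquad A_{(i)} = I_{(i)}^* A_{(i)}' (I_{(i)}^*)^t.
\]
Setting $y := (I_{(i)}^*)^t x$, these identities give $x^tM_{(i)}x = y^tM_{(i)}'y$ and $x^tA_{(i)}x = y^tA_{(i)}'y$, transferring the elementwise estimate to the smaller space where $M_{(i)}'\succ0$. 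On that space I can apply Lemma \ref{lem:largestEigValue}: since $M_{(i)}'$ is positive definite and $A_{(i)}'$ symmetric, the Rayleigh quotient is bounded by the largest generalized eigenvalue, so that
\[
y^tA_{(i)}'y \leq \lambda_{max}\big((M_{(i)}')^{-1}A_{(i)}'\big)\, y^tM_{(i)}'y \leq \lambda^*\, y^tM_{(i)}'y,
\]
where the second inequality uses $y^tM_{(i)}'y\geq0$ together with the definition of $\lambda^*$. Translating back through the factorizations yields $x^tA_{(i)}x\leq\lambda^* x^tM_{(i)}x$, and summing over $i=1,\dots,n$ gives the desired bound $x^tAx\leq\lambda^* x^tMx$.

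The hard part is conceptual rather than computational: each summand $M_{(i)}$ is positive definite only on the coordinate subspace indexed by its support and is singular (with $A_{(i)}$ possibly indefinite) on all of $\mathbb{R}^N$, so no global Rayleigh bound can be applied directly. The compression by $I_{(i)}^*$ is precisely what removes the kernel directions and renders $M_{(i)}'$ invertible, while the hypothesis $I_{(i)}A_{(i)}I_{(i)}=A_{(i)}$ guarantees that $A_{(i)}$ lives on the same coordinate subspace, so that the restriction loses no information. A minor point to watch is the convention that $\lambda_{max}$ denotes the largest eigenvalue in magnitude: when $A_{(i)}'$ is indefinite this may exceed the largest \emph{signed} generalized eigenvalue that appears in the Rayleigh bound, but since it only enlarges the right-hand side the stated inequality still holds, and the argument in fact proves the slightly sharper statement with the signed largest eigenvalue.
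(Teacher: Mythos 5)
Your proof is correct and takes essentially the same route as the paper: both arguments localize the bound through Lemma \ref{lem:largestEigValue}, applied once globally (to express $\lambda_{max}(M^{-1}A)$ as a Rayleigh quotient) and once per submatrix pair $\big(M_{(i)}',A_{(i)}'\big)$. The only difference is organizational: the paper bounds the ratio of sums by the maximum of local ratios via the mediant inequality (Lemma \ref{lem:ineqFrac}) together with an index set tracking where $I_{(i)}\vvct{u}\neq\vvct{0}$, whereas you sum the equivalent elementwise quadratic-form inequalities $x^tA_{(i)}x\leq\lambda^*\,x^tM_{(i)}x$; your explicit factorizations $M_{(i)}=I^*_{(i)}M_{(i)}'(I^*_{(i)})^t$ and $A_{(i)}=I^*_{(i)}A_{(i)}'(I^*_{(i)})^t$ also make precise the compression step that the paper handles implicitly when it passes from $\mathbb{R}^N$ to $\mathbb{R}^{N_i}$.
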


\begin{rem} The matrices $M'_{(i)}$ and $A'_{(i)}$ are the submatrices of $M_{(i)}$ and $A_{(i)}$, respectively, obtained by removing all rows and columns corresponding to the zero rows and columns of $M_{(i)}$. The condition $I_{(i)}A_{(i)}I_{(i)} = A_{(i)}$ means that any zero column or row of $M_{(i)}$ is also a zero column or row of $A_{(i)}$, and the condition $M_{(i)}' \succ 0$ means that the submatrices of $M$ are positive definite.
\end{rem}

\begin{proof}
For any $\vvu\in\mathbb{R}^n$, define the following set of indices:
\begin{align*}
\mathbb{I}(\vvu) &:= \big\{i\in\{1,\dots,n\}\;\big|\; I_{(i)}\vvu \neq \vvct{0} \big\}.
\end{align*}
Using Lemma \ref{lem:largestEigValue} and Lemma \ref{lem:ineqFrac} we can then bound the largest eigenvalue as follows:
\begin{align*}
\lambda_{max}(M^{-1}A) &= \sup_{\vvu\in\mathbb{R}^{N}, \;\vvu\neq \vvct{0} } \frac{\vvu^tA\vvu}{ \vvu^tM\vvu} \\
&= \sup_{\vvu\in\mathbb{R}^{N}, \;\vvu\neq \vvct{0} } \frac{\sum_{i=1}^n  \vvu^tA_{(i)}\vvu}{\sum_{i=1}^n  \vvu^tM_{(i)}\vvu} 
= \sup_{\vvu\in\mathbb{R}^{N}, \;\vvu\neq \vvct{0} } \frac{\sum_{i\in\mathbb{I}(\vvu)} \vvu^tA_{(i)}\vvu}{\sum_{i\in\mathbb{I}(\vvu)}  \vvu^tM_{(i)}\vvu} \\
&\leq \sup_{\vvu\in\mathbb{R}^{N}, \;\vvu\neq \vvct{0} } \;\max_{ i\in\mathbb{I}(\vvu) } \frac{ |\vvu^tA_{(i)}\vvu|}{ \vvu^tM_{(i)}\vvu} 
=  \max_{i=1,\dots,n} \;\sup_{\vvu\in\mathbb{R}^{N}, \;I_{(i)}\vvu\neq \vvct{0} } \frac{ |\vvu^tA_{(i)}\vvu|}{ \vvu^tM_{(i)}\vvu}.
\end{align*}
Using Lemma \ref{lem:largestEigValue} again, we can obtain, for any $i=1,..,n$, the following:
\begin{align*}
\sup_{\vvu\in\mathbb{R}^{N}, \;I_{(i)}\vvu\neq \vvct{0} } \frac{| \vvu^tA_{(i)}\vvu|}{ \vvu^tM_{(i)}\vvu} &= \sup_{\vvu\in\mathbb{R}^{N_i}, \;\vvu\neq \vvct{0} } \frac{ |\vvu^tA_{(i)}'\vvu|}{ \vvu^tM_{(i)}'\vvu} 
= \lambda_{max}\big((M_{(i)}')^{-1}A_{(i)}'\big),
\end{align*}
where $N_i$ is the number of nonzero columns of $M_{(i)}$. Combining these results gives (\ref{eq:spectralRadius}).
\end{proof}

To apply the above theorem it remains to find a decomposition of the matrices $M$ and $A$ such that (\ref{eq:matrixDec}) is satisfied. For continuous finite elements such a decomposition can be easily obtained from the element matrices,
\begin{align*}
M=\sum_{e\in\mathcal{T}_h} M_e, \qquad A=\sum_{e\in\mathcal{T}_h} A_e,
\end{align*}
where $M_e$ and $A_e$ are the element matrices corresponding to the mass matrix $M$ and stiffness matrix $A$, respectively. Using Theorem \ref{thm:spectralRadius} we then obtain the following estimate for the spectral radius:
\begin{align*}
\lambda_{max}(M^{-1}A) &\leq \max_{e\in\mathcal{T}_h} \lambda_{max}\big((M_{e}')^{-1}A_{e}'\big),
\end{align*}
where $M_{e}' := (I_{e}^{*})^tM_{e}I_{e}^*$, $A_{e}' := (I_{e}^{*})^tA_{e}I_{e}^*$ and $I_{e}^* :=\mathcal{I}^*(M_{e})$. In other words, the largest eigenvalue of the global matrix is bounded by the supremum over all elements of the largest eigenvalue of the element matrix. This result was already mentioned by \cite{irons71}. For discontinuous elements, however, a suitable decomposition of the matrices is less straightforward due to the face integral terms. In the next subsection we show how we can decompose the matrices for discontinuous elements, using a weighted mesh decomposition.

\subsection{A weighted mesh decomposition}
\label{sec:weightedDomDec}
We define a weighted submesh $\omega:\mathcal{T}_h\cup\mathcal{F}_h\rightarrow[0,1]$ to be a function that assigns to every element and face a weight value $\omega_e$ and $\omega_f$ between $0$ and $1$, such that if $\omega_f>0$ for a certain face $f$, then $\omega_e>0$ for the adjacent elements $e\in\mathcal{T}_f$. We call a set of weighted submeshes $\mathcal{W}_h$ a weighted mesh decomposition of $\mathcal{T}_h$ if the sum of all weighted submeshes adds up to one for every face and element: $\sum_{\omega\in\mathcal{W}_h} \omega_e = 1$ for all $e\in\mathcal{T}_h$ and $\sum_{\omega\in\mathcal{W}_h} \omega_f = 1$ for all $f\in\mathcal{F}_h$. An illustration of a weighted mesh decomposition is given in Figure \ref{fig:weightedMeshDec}.

\begin{figure}[h]
\centering
\includegraphics[width=0.8\textwidth]{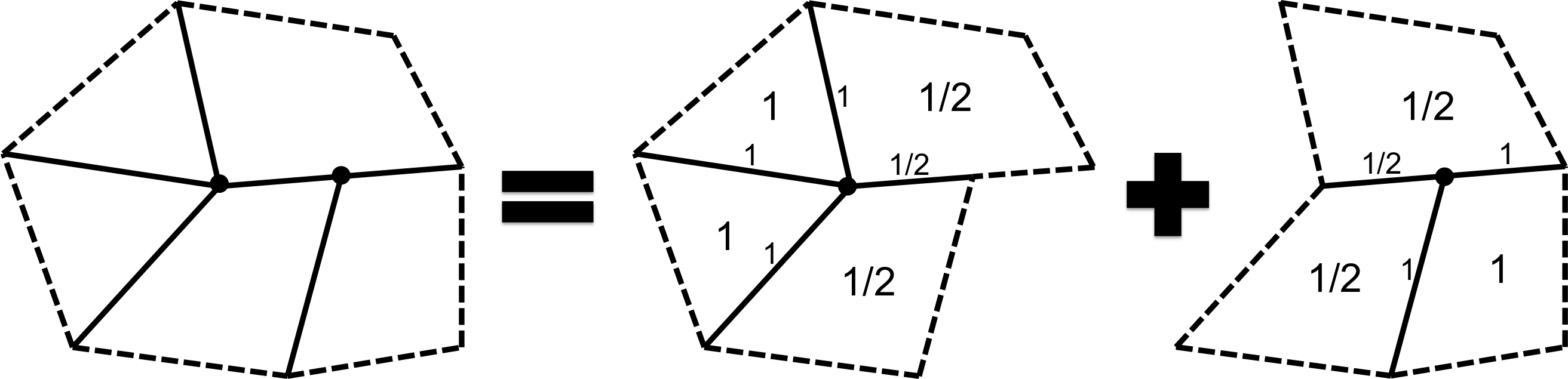}
\caption{A weighted mesh decomposition. The larger numbers denote the element weights, while the smaller numbers denote the face weights. Weight values of elements and faces outside the illustrated subdomains are zero. }
\label{fig:weightedMeshDec}
\end{figure}

We can use a weighted submesh to construct bilinear forms $(\cdot,\cdot)_{\omega}, a_{\omega}:U_h\times U_h\rightarrow\mathbb{R}$ as follows:
\begin{align*}
(\vu,\vw)_{\omega} &:= \sum_{e\in\mathcal{T}_h} \omega_e \int_e \rho\vu\cdot\vw \;dx, \\
a_{\omega}(\vu,\vw) &:= a_{\omega}^{(C)}(\vu,\vw) + a_{\omega}^{(DG)}(\vu,\vw) + a_{\omega}^{(DG)}(\vw,\vu) + a_{\omega}^{(IP)}(\vu,\vw) 
\end{align*}
with
\begin{align*}
a_{\omega}^{(C)}(\vu,\vw) &:=  \sum_{e\in\mathcal{T}_{h}} \omega_e \int_{e} (\nabla\vu)^t:\Ch:\nabla\vw \;dx, \\
a_{\omega}^{(DG)}(\vu,\vw) &:=  \sum_{e\in\mathcal{T}_{h}}\sum_{f\in\mathcal{F}_{e}} \omega_f \int_{\partial e\cap f} (\vu^*-\vu)\vn:\Ch:\nabla\vw \;ds, \\
a_{\omega}^{(IP)}(\vu,\vw) &:=  \sum_{e\in\mathcal{T}_{h}}\sum_{f\in\mathcal{F}_{e}} \omega_f\eta_e \int_{\partial e\cap f}(\vu^*-\vu)\vn:\nu_h \Ch:\vn(\vw^*-\vw) \; ds. 
\end{align*}
Note that $(\vu,\vw) = \sum_{\omega\in\mathcal{W}_h} (\vu,\vw)_{\omega}$ and $a_h(\vu,\vw) = \sum_{\omega\in\mathcal{W}_h} a_{\omega}(\vu,\vw)$, for all $\vu,\vw\in U_h$.

For the numerical tests, we will in particular consider a weighted mesh decomposition based on the vertices, as illustrated in Figure \ref{fig:nodalDomDec}. The vertex-based mesh decomposition is given by $\mathcal{W}_h:= \{ \omega^{(q)} \}_{q\in\mathcal{Q}}$, with
\begin{align}
\omega^{(q)}_{e} := \begin{cases}
\frac{1}{|\mathcal{Q}_e|} & e\in\mathcal{T}_q, \\
0 & \text{otherwise},
\end{cases} \qquad
\omega^{(q)}_{f} := \begin{cases}
\frac{1}{|\mathcal{Q}_f|} & e\in\mathcal{F}_q, \\
0 & \text{otherwise},
\end{cases}
\label{eq:nodalDomDec}%
\end{align}
where $|\mathcal{Q}_e|$, $|\mathcal{Q}_f|$ are the number of vertices adjacent to element $e$ and face $f$, respectively, and $\mathcal{T}_q$, $\mathcal{F}_q$ are the set of elements and faces adjacent to $q$, respectively.

\begin{figure}[h]
\centering
\includegraphics[width=0.6\textwidth]{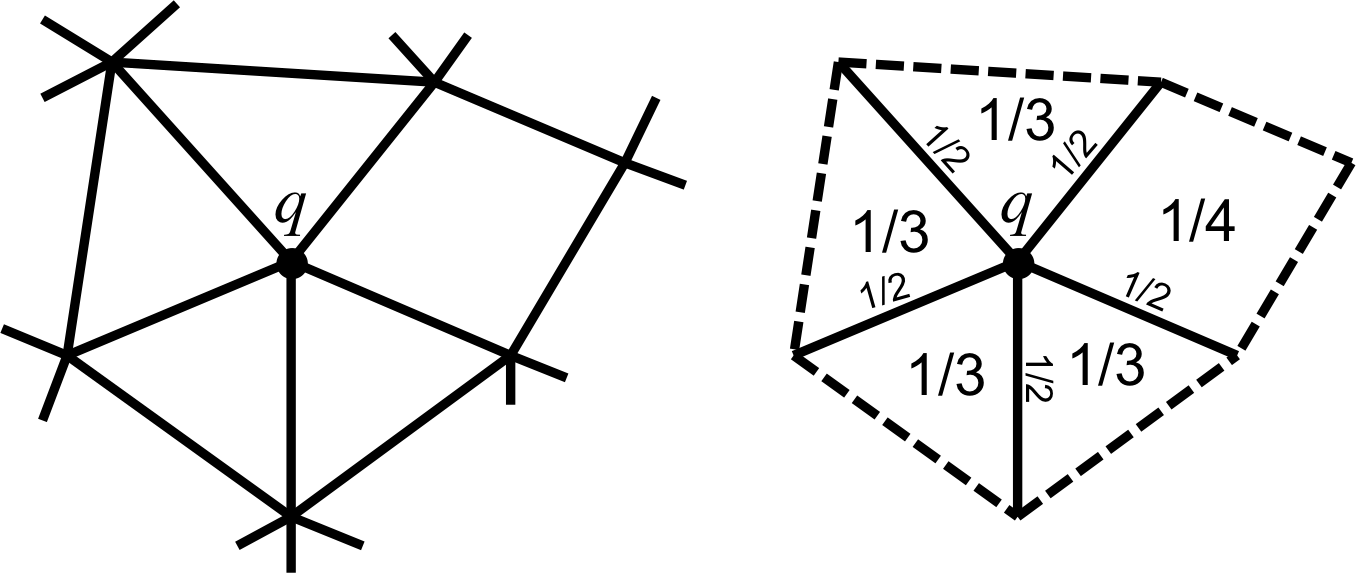}
\caption{Illustration of a vertex-based mesh decomposition. For every vertex $q$, a weighted submesh $\omega^{(q)}$ is created assigning nonzero values only for elements and faces directly adjacent to the vertex. }
\label{fig:nodalDomDec}
\end{figure}

Now let $\{\vw_i\}_{i=1}^N$ be a linear basis of $U_h$, such that every basis function is nonzero on only a single element $e_i$. We can use a weighted mesh decomposition $\mathcal{W}_h$ to decompose the mass matrix and stiffness matrix as follows:
\begin{align*}
M = \sum_{\omega\in\mathcal{W}_h} M_{\omega}, \qquad A=\sum_{\omega\in\mathcal{W}_h} A_{\omega},
\end{align*}
where $[M_{\omega}]_{ij} := (\vw_i,\vw_j)_{\omega}$ and $[A_{\omega}]_{ij} := a_{\omega}(\vw_i,\vw_j)$, for $i,j=1,\dots,N$. Using Theorem \ref{thm:spectralRadius} we can immediately obtain the following estimate for the spectral radius and therefore the time step size.
\begin{thm}
Let $\mathcal{W}_h$ be a weighted mesh decomposition. Then
\begin{align}
\lambda_{max}(M^{-1}A) &\leq \max_{\omega\in\mathcal{W}_h} \lambda_{max}\big((M_{\omega}')^{-1}A_{\omega}'\big),
\label{eq:spectralRadiusDG}
\end{align}
where $M_{\omega}' := (I_{\omega}^{*})^tM_{\omega}I_{\omega}^*$,  $A_{\omega}' := (I_{\omega}^{*})^tA_{\omega}I_{\omega}^*$, and $I_{\omega}^* :=\mathcal{I}^*(M_{\omega})$, and where $\lambda_{max}(\cdot)$ denotes the largest eigenvalue in magnitude.
\label{thm:spectralRadiusDG}
\end{thm}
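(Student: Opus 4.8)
The plan is to obtain this result as a direct corollary of Theorem~\ref{thm:spectralRadius}, letting the index $i$ range over the weighted submeshes $\omega\in\mathcal{W}_h$ and setting $M_{(i)}=M_{\omega}$, $A_{(i)}=A_{\omega}$, with the matrices defined by $[M_{\omega}]_{ij}=(\vw_i,\vw_j)_{\omega}$ and $[A_{\omega}]_{ij}=a_{\omega}(\vw_i,\vw_j)$ as in the preceding construction. The whole task then reduces to verifying that these matrices satisfy the hypotheses (\ref{eq:matrixDec}), after which (\ref{eq:spectralRadiusDG}) follows at once from (\ref{eq:spectralRadius}), since the reduced matrices $M_{\omega}'$, $A_{\omega}'$ defined through $I_{\omega}^*=\mathcal{I}^*(M_{\omega})$ are exactly those produced by the theorem.

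First I would dispatch the routine hypotheses. The global matrix $M$ is symmetric positive definite, being the mass matrix associated with $\rho\geq\rho_{min}>0$, and $A$ is symmetric positive semidefinite by the coercivity results of Section~\ref{sec:estPen} (assuming the penalty terms satisfy the bounds derived there, so that $a_h(\vu,\vu)\geq0$). The summation identities $\sum_{\omega}M_{\omega}=M$ and $\sum_{\omega}A_{\omega}=A$ follow immediately from the defining property of a weighted mesh decomposition, namely $\sum_{\omega\in\mathcal{W}_h}\omega_e=1$ and $\sum_{\omega\in\mathcal{W}_h}\omega_f=1$, which yield $(\vu,\vw)=\sum_{\omega}(\vu,\vw)_{\omega}$ and $a_h(\vu,\vw)=\sum_{\omega}a_{\omega}(\vu,\vw)$. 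For the condition $M_{\omega}'\succ0$, I would use that each $\vw_i$ is supported on a single element $e_i$, so $M_{\omega}$ is block diagonal over the elements and its $i$-th column is nonzero exactly when $\omega_{e_i}>0$; the surviving diagonal blocks equal $\omega_e$ times the element mass matrices, which are positive definite because $\rho>0$ and the element basis is linearly independent, whence $M_{\omega}'\succ0$.

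The crux, and the step I expect to be the main obstacle, is verifying $I_{\omega}A_{\omega}I_{\omega}=A_{\omega}$, i.e.\ that $\omega_{e_i}=0$ forces the entire row and column $i$ of $A_{\omega}$ to vanish; by symmetry of $a_{\omega}$ it suffices to show $a_{\omega}(\vw_i,\vw_j)=0$ for all $j$. I would examine the four constituent forms separately. The volume term $a_{\omega}^{(C)}(\vw_i,\vw_j)$ carries the factor $\omega_{e_i}$ and so vanishes outright. For the face terms the decisive ingredient is the defining property of a weighted submesh: if $\omega_f>0$ then $\omega_e>0$ for every adjacent element $e\in\mathcal{T}_f$. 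Any contribution to $a_{\omega}^{(DG)}$ or $a_{\omega}^{(IP)}$ involving $\vw_i$ arises from an integral over a face $f\in\mathcal{F}_{e_i}$, since the trace $\vw_i$, its gradient $\nabla\vw_i$, and the flux $\vw_i^*=\av{\vw_i}$ are all nonzero only on faces adjacent to $e_i$; such a contribution is weighted by $\omega_f$, but $e_i\in\mathcal{T}_f$ together with $\omega_{e_i}=0$ forces $\omega_f=0$ by the submesh property, so every such term drops out. Running this argument through each of $a_{\omega}^{(DG)}(\vw_i,\vw_j)$, $a_{\omega}^{(DG)}(\vw_j,\vw_i)$, and $a_{\omega}^{(IP)}(\vw_i,\vw_j)$ shows they all vanish, establishing the condition.

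With all hypotheses of (\ref{eq:matrixDec}) in hand, Theorem~\ref{thm:spectralRadius} applies with the submeshes as the index set and yields (\ref{eq:spectralRadiusDG}) directly, the right-hand side being $\max_{\omega\in\mathcal{W}_h}\lambda_{max}\big((M_{\omega}')^{-1}A_{\omega}'\big)$.
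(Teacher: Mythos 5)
Your proposal is correct and follows exactly the paper's route: the paper also obtains Theorem \ref{thm:spectralRadiusDG} as an immediate corollary of Theorem \ref{thm:spectralRadius} applied to the decomposition $M=\sum_{\omega}M_{\omega}$, $A=\sum_{\omega}A_{\omega}$. The paper leaves the verification of the hypotheses (\ref{eq:matrixDec}) implicit, and your check of them---in particular that $\omega_{e_i}=0$ forces $\omega_f=0$ on every face adjacent to $e_i$, so that $I_{\omega}A_{\omega}I_{\omega}=A_{\omega}$---is exactly the argument needed and is carried out correctly.
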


\begin{rem} When the weighted submeshes $\omega$ are nonzero for only a few elements and faces, then $M'_{\omega}$ and $A'_{\omega}$ are relatively small matrices. The largest eigenvalue $ \lambda_{max}\big((M_{\omega}')^{-1}A_{\omega}'\big)$ can then be efficiently computed in parallel for each submatrix, using a power iteration method requiring only a relatively small number of iterations.
\end{rem}

In the next section we show several numerical results illustrating the sharpness of the penalty term and time step estimates.

\section{Numerical results}
\label{sec:numResults}

\subsection{Computing the spectral radius for periodic meshes}
\label{sec:periodicMesh}
To test the sharpness of the penalty term estimates and time step estimates we consider a $d$-dimensional cubic domain of the form $(0,N)^d$ with periodic boundary conditions. We then create a uniform mesh of $N^d$ unit cubes, after which we subdivide every cube into smaller elements and choose basis function sets and material parameters for every subelement. These subelements, basis functions and material parameters are chosen identically for every cube. An illustration of such a mesh is given in Figure \ref{fig:semiUniformMesh2D}. The advantage of such a uniform periodic mesh is that we can rather easily obtain the exact spectral radius by using a Fourier analysis, in a way similar to the von Neumann method for finite difference schemes.

\begin{figure}[h]
\centering
\includegraphics[width=0.6\textwidth]{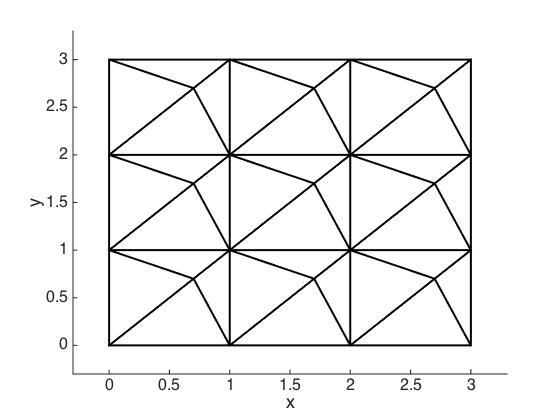}
\caption{Square 2D mesh consisting of $3^2$ unit cubes, where each square is identically subdivided into four distorted triangles.}
\label{fig:semiUniformMesh2D}
\end{figure}

To apply a Fourier analysis we first choose a linear basis $\{\vw_i\}_{i=1}^{N^d\times M}$ of the discrete function space $U$, such that the linear basis is of the form 
\begin{align*}
\{\vw_i\}_{i=1}^{N^d\times M} = \bigcup_{\mathbf{k}\in\mathbb{Z}_N^d} \{\vw_{\mathbf{k}, i}\}_{i=1}^M,
\end{align*}
where $\mathbf{k}\in\mathbb{Z}_N^d$ is the identifier of unit cube $(k_1,k_1+1)\times \cdots\times (k_d,k_d+1)$ and $ \{\vw_{\mathbf{k}, i}\}_{i=1}^M$ is a linear basis of the discrete space $U$ restricted to this cube. We can then define the submatrices $M_{\mathbf{k},\mathbf{l}}, A_{\mathbf{k},\mathbf{l}} \in\mathbb{R}^{M\times M}$ as follows:
\begin{align*}
[M_{\mathbf{k},\mathbf{l}}]_{ij} &:= (\rho \vw_{\mathbf{k},i}, \vw_{\mathbf{l},j}), &&i,j=1,\dots,M, \;\mathbf{k}, \mathbf{l}\in\mathbb{Z}_N^d,  \\
[A_{\mathbf{k},\mathbf{l}}]_{ij} &:= a_h(\rho \vw_{\mathbf{k},i}, \vw_{\mathbf{l},j}), &&i,j=1,\dots,M, \;\mathbf{k}, \mathbf{l}\in\mathbb{Z}_N^d,
\end{align*}
By construction of the mesh, most of these submatrices are identical. Fix any $\mathbf{l}\in\mathbb{Z}_N^d$. Then the submatrices $M_{\mathbf{k},\mathbf{k}+\mathbf{l}}$ are identical for any $\mathbf{k}\in\mathbb{Z}_N^d$. The same holds for $A_{\mathbf{k},\mathbf{k}+\mathbf{l}}$. Moreover, by definition of the mass matrix, $M_{\mathbf{k},\mathbf{k}+\mathbf{l}}$ is only nonzero when $\mathbf{l}=\mathbf{0}$, and by construction of the stiffness matrix, $A_{\mathbf{k},\mathbf{k}+\mathbf{l}}$ is only nonzero when $|\mathbf{l}|\leq 1$. Therefore, we only have to consider the submatrices $M_{0} := M_{\mathbf{k}, \mathbf{k}}$, $A_{0} := A_{\mathbf{k},\mathbf{k}}$, and $A_i^\pm := A_{\mathbf{k},\mathbf{k}\pm\mathbf{e}_i}$ for $i=1,\dots,d$, where $\mathbf{k}$ is an arbitrary vector in $\mathbb{Z}_{N}^d$ and $\mathbf{e}_i$ is the unit vector in direction $i$.

Now let $\vvu\in\mathbb{R}^{N^d\times M}$ be a vector of coefficients, and let $\vvu_{\mathbf{k}}\in\mathbb{R}^M$ be the vector of coefficients corresponding to cube $\mathbf{k}$. Suppose that $\vvw=M^{-1}A\vvu$. We can then write
\begin{align*}
\vvw_{\mathbf{k}} &= M^{-1}_0\left( A_0\vvu_{\mathbf{k}} + \sum_{i=1}^d (A_i^+\vvu_{\mathbf{k}+\mathbf{e}_i} + A_i^-\vvu_{\mathbf{k}-\mathbf{e}_i}) \right), &&\mathbf{k}\in\mathbb{Z}_N^d.
\end{align*}
Define $\vvu^{(\mathbf{z})}\in\mathbb{R}^{N^d\times M}$ as follows:
\begin{align}
\vvu_{\mathbf{k}}^{(\mathbf{z})} =  e^{\im(\mathbf{z}\cdot\mathbf{k}/N)2\pi}\vvu_{0}, &&\mathbf{k}\in\mathbb{Z}_N^d,
\label{eq:defu_z}
\end{align}
where $\vvu_0\in\mathbb{R}^M$ is an arbitrary vector of coefficients corresponding to a single cube, $\im:=\sqrt{-1}$ is the imaginary number, and $\mathbf{z}\in\mathbb{Z}_N^d$ is a vector of integers. Then $\vvw^{(\mathbf{z})}:=M^{-1}A\vvu^{(\mathbf{z})}$ satisfies
\begin{align}
\vvw_{\mathbf{k}}^{(\mathbf{z})} &= e^{\im(\mathbf{z}\cdot\mathbf{k}/N)2\pi} Z^{(\mathbf{z})} \vvu_0, && \mathbf{k}\in\mathbb{Z}_N^d,
\label{eq:propw_z}
\end{align} 
where
\begin{align*}
Z^{(\mathbf{z})} &:=  M^{-1}_0\left( A_0 + \sum_{i=1}^d (e^{\im(z_i/N)2\pi}A_i^+ + e^{-\im(z_i/N)2\pi}A_i^-)\right), && \mathbf{z}\in\mathbb{Z}_N^d.
\end{align*}
From (\ref{eq:defu_z}) and (\ref{eq:propw_z}) it follows that if $(\lambda, \vvu_0)$ is an eigenpair of $Z^{(\mathbf{z})}$, then $(\lambda, \vvu^{(\mathbf{z})})$ is an eigenpair of $M^{-1}A$. Since $Z^{(\mathbf{z})}$ has $M$ eigenpairs and since there are $N^d$ possible choices for $\mathbf{z}$, every eigenvalue of $M^{-1}A$ is an eigenvalue of $Z^{(\mathbf{z})}$ for some $\mathbf{z}\in\mathbb{Z}_N^d$. For the time step estimates we are only interested in the largest eigenvalue $\lambda_{max}(M^{-1}A)$, which we can then compute by
\begin{align*}
\lambda_{max}(M^{-1}A) &= \sup_{\mathbf{z}\in\mathbb{Z}_N^d}\lambda_{max}(Z^{(\mathbf{z})} ).
\end{align*}
For the numerical tests that we will present here, we have taken $N=2$, since in most cases the largest eigenvalue $\lambda_{max}(M^{-1}A)$ no longer increases significantly for $N>2$.

\subsection{Sharpness of the penalty term and time step estimates}
For testing the sharpness of our parameter estimates we use polynomial basis functions up to degree $p$ for simplicial elements, and polynomials up to degree $p$ in the direction of each reference coordinate for quadrilateral and hexahedral elements. First, we consider several regular homogeneous meshes for the acoustic wave equation in 1D, 2D and 3D. After that we test on meshes with deformed elements and meshes with piecewise linear parameter fields. We also test on meshes for electromagnetic and elastic wave problems, including heterogeneous meshes with sharp material contrasts and meshes with sharp contrasts in primary and secondary wave velocities.

To test the sharpness of the parameters we first compute the penalty terms as in Theorem \ref{thm:condPen1} or Theorem \ref{thm:condPen2} with $c_{\kappa}=1$. We will refer to the first penalty terms as $\eta_e^{*}$ and to the second as $\eta_e^{**}$. We then find the smallest scale $c_{min}\in[0,1]$ such that the stiffness matrix $A$ is still positive semidefinite when using the downscaled penalty terms $\eta_{min,e}:=c_{min}\eta_e$. We compute $c_{min}$ accurate to two decimal places using the bisection method. 

After we have computed $\eta$ and $c_{min}$, we consider the time step condition for the leap--frog scheme and use this to compute the time step size in the three ways given below: 
\begin{subequations}
\begin{align}
\Delta t(\eta_{min}) &:= \frac{2}{\sqrt{\lambda_{max}(M^{-1}A_{min})}}, \\
\Delta t(\eta) &:= \frac{2}{\sqrt{\lambda_{max}(M^{-1}A)}}, \\
\Delta t_{est}(\eta) &:= \frac{2}{\sqrt{\sup_{\omega\in\mathcal{W}_h} \lambda_{max}\big((M'_{\omega})^{-1}A'_{\omega}\big)}},
\end{align}
\label{eq:timeStepSizes}%
\end{subequations}
Here $A_{min}$ is the stiffness matrix that results from using the downscaled penalty terms $\eta_{min,e}$, and $M'_{\omega}$ and $A'_{\omega}$ are the submatrices corresponding to the weighted submesh $\omega$.  These time step sizes can be interpreted as follows: $\Delta t(\eta_{min})$ is the largest allowed time step size when using the minimum downscaled penalty terms $\eta_{min,e}$, $\Delta t(\eta)$ is the largest allowed time step size when using the penalty term estimates $\eta_e$, and $\Delta t_{est}(\eta)$ is the time step estimate when using the penalty term estimates $\eta_e$ and a weighted mesh decomposition. 

For our time step estimate $\Delta t_{est}(\eta)$ we will use the vertex-based mesh decomposition as given in (\ref{eq:nodalDomDec}). We will measure the sharpness of the penalty term by $\Delta t(\eta_{min}) / \Delta t(\eta)$, and we will measure the sharpness of our time step estimate by $\Delta t(\eta) / \Delta t_{est}(\eta)$.

\subsubsection{Regular meshes}
For the first tests we consider the acoustic wave equation as given in Example \ref{exm:acousticWave}, with $c=1$. We use meshes of the form described in Section \ref{sec:periodicMesh}, with element subdivisions as listed below. An illustration of some of the element subdivisions is given in Figure \ref{fig:elSubdiv1}.
\begin{itemize}
  \item \textit{1D}: mesh constructed from unit intervals.
  \item \textit{square}: 2D mesh constructed from unit squares.
  \item \textit{triangular}: 2D mesh constructed from unit squares, with each square subdivided into two triangles.
  \item \textit{cubic}: 3D mesh constructed from unit cubes.
  \item \textit{tetrahedral}: 3D mesh constructed from unit cubes, with each cube subdivided into six pyramids, and every pyramid subdivided into four tetrahedra.
\end{itemize}

The results of the parameter estimates, when using the penalty term $\eta^{*}$, are given in Table \ref{tab:regMesh1}. The results when using $\eta^{**}$ are given in Table \ref{tab:regMesh2}. From these tables we can already see that our second penalty term estimate $\eta^{**}$ is in general much sharper than the first estimate $\eta^{*}$. This is true especially for cubes, where $\eta^{*}$ causes a reduction in the largest allowed time step size of more than $2$. Also, for square and tetrahedral meshes the first penalty term estimate causes a reduction in the time step size of more than a factor $1.5$. On the other hand, when using $\eta^{**}$ the largest allowed time step size is never reduced more than a factor $1.2$, and for many of the regular meshes $\Delta t(\eta_{min}^{**})/ \Delta t(\eta^{**})$ is even below $1.01$. For the 1D, square, and cubic meshes, this penalty term and corresponding time step estimate even coincide with the analytic results derived in \cite{agut13}. In general, the time step estimate does not reduce the time step size by a factor more than $1.2$ when using $\eta^{**}$ and not more than $1.3$ when using $\eta^{*}$.

\begin{figure}[h]
\centering
\begin{subfigure}[b]{0.3\textwidth}
	\includegraphics[width=\textwidth]{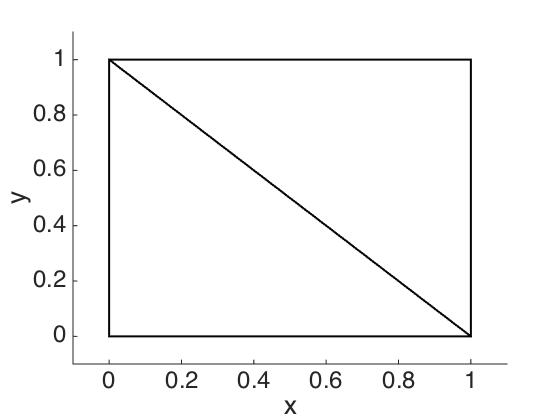}
        \caption{A square subdivided into two triangles.}
\end{subfigure} \;
\begin{subfigure}[b]{0.3\textwidth}
	\includegraphics[width=\textwidth]{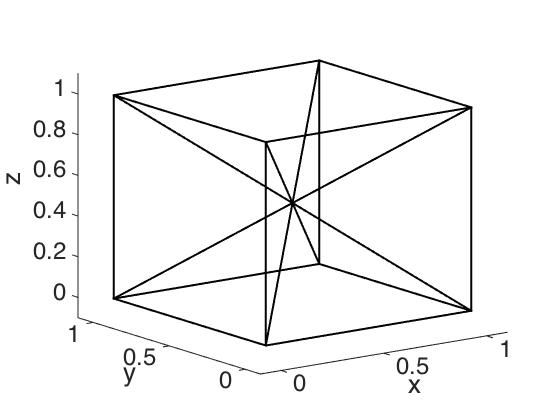}
        \caption{A cube subdivided into six pyramids.}
\end{subfigure} \;
\begin{subfigure}[b]{0.3\textwidth}
	\includegraphics[width=\textwidth]{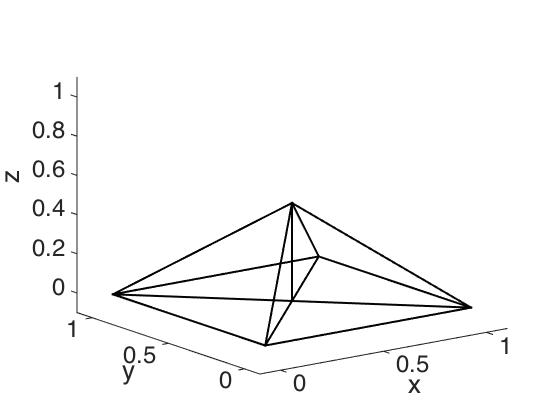}
        \caption{A pyramid subdivided into four tetrahedra.}
\end{subfigure}
\caption{}
\label{fig:elSubdiv1}
\end{figure}

\begin{table}[h]
\begin{center}
\begin{tabular}{c|c||c|c|c|c|c|c}
Mesh & p & $c_{min}^*$ & $\Delta t(\eta_{min}^*)$ &  $\Delta t(\eta^*)$ &  $\Delta t_{est}(\eta^*)$& $ \frac{\Delta t(\eta_{min}^*)}{ \Delta t(\eta^*) }$&  $\frac{\Delta t(\eta^*) }{ \Delta t_{est}(\eta^*) }$ \\ \hline\hline
1D 		& 1 & 1.00 & 0.5774 & 0.5774 & 0.5774 & 1.00 & 1.00 \\
		& 2 & 1.00 & 0.2582 & 0.2582 & 0.2582 & 1.00 & 1.00 \\
		& 3 & 1.00 & 0.1533 & 0.1533 & 0.1533 & 1.00 & 1.00 \\ \hline
square 	& 1 & 0.25 & 0.4082 & 0.2357 & 0.2019 & 1.73 & 1.17 \\
		& 2 & 0.33 & 0.1826 & 0.1170 & 0.0956 & 1.56 & 1.22 \\
		& 3 & 0.38 & 0.1084 & 0.0694 & 0.0554 & 1.56 & 1.25 \\ \hline
triangular	& 1 & 0.67 & 0.2579 & 0.2273 & 0.1948 & 1.13 & 1.17 \\
		& 2 & 0.69 & 0.1406 & 0.1250 & 0.1048 & 1.12 & 1.19 \\
		& 3 & 0.70 & 0.0906 & 0.0739 & 0.0621 & 1.23 & 1.19 \\ \hline
cubic 	& 1 & 0.14 & 0.3333 & 0.1361 & 0.1172 & 2.45 & 1.16 \\
		& 2 & 0.20 & 0.1491 & 0.0678 & 0.0554 & 2.20 & 1.22 \\
		& 3 & 0.23 & 0.0885 & 0.0405 & 0.0322 & 2.19 & 1.26 \\ \hline
tetrahedral&1 & 0.38 & 0.1035 & 0.0635 & 0.0560 & 1.63 & 1.13 \\
		& 2 & 0.44 & 0.0598 & 0.0384 & 0.0336 & 1.56 & 1.14 \\
		& 3 & 0.48 & 0.0360 & 0.0243 & 0.0212 & 1.48 & 1.15		
\end{tabular}
\end{center}
\caption{Parameter estimates on regular meshes, using penalty term $\eta^*$.}
\label{tab:regMesh1}
\end{table}

\begin{table}[h]
\begin{center}
\begin{tabular}{c|c||c|c|c|c|c|c}
Mesh & p & $c_{min}^{**}$ & $\Delta t(\eta_{min}^{**})$ &  $\Delta t(\eta^{**})$ &  $\Delta t_{est}(\eta^{**})$& $ \frac{\Delta t(\eta_{min}^{**})}{ \Delta t(\eta^{**}) }$&  $\frac{\Delta t(\eta^{**}) }{ \Delta t_{est}(\eta^{**}) }$ \\ \hline\hline
1D 		& 1 & 1.00 & 0.5774 & 0.5774 & 0.5774 & 1.00 & 1.00 \\
		& 2 & 1.00 & 0.2582 & 0.2582 & 0.2582 & 1.00 & 1.00 \\
		& 3 & 1.00 & 0.1533 & 0.1533 & 0.1533 & 1.00 & 1.00 \\ \hline
square 	& 1 & 1.00 & 0.4082 & 0.4082 & 0.4082 & 1.00 & 1.00 \\
		& 2 & 1.00 & 0.1826 & 0.1826 & 0.1826 & 1.00 & 1.00 \\
		& 3 & 1.00 & 0.1084 & 0.1084 & 0.1084 & 1.00 & 1.00 \\ \hline
triangular	& 1 & 1.00 & 0.2582 & 0.2582 & 0.2427 & 1.00 & 1.06 \\
		& 2 & 0.96 & 0.1406 & 0.1399 & 0.1275 & 1.01 & 1.10 \\
		& 3 & 0.96 & 0.0906 & 0.0896 & 0.0755 & 1.01 & 1.19 \\ \hline
cubic 	& 1 & 1.00 & 0.3333 & 0.3333 & 0.3333 & 1.00 & 1.00 \\
		& 2 & 1.00 & 0.1491 & 0.1491 & 0.1491 & 1.00 & 1.00 \\
		& 3 & 1.00 & 0.0885 & 0.0885 & 0.0885 & 1.00 & 1.00 \\ \hline
tetrahedral& 1 & 0.75 & 0.1040 & 0.0918 & 0.0803 & 1.13 & 1.14 \\
		& 2 & 0.74 & 0.0599 & 0.0510 & 0.0455 & 1.17 & 1.15 \\
		& 3 & 0.81 & 0.0359 & 0.0320 & 0.0279 & 1.12 & 1.15	
\end{tabular}
\end{center}
\caption{Parameter estimates on regular meshes, using penalty term $\eta^{**}$.}
\label{tab:regMesh2}
\end{table}

\begin{table}[h]
\begin{center}
\begin{tabular}{c|c||c|c|c|c|c}
Mesh & p & $\Delta t(\eta)$ &$\Delta t(\eta^*)$ &$\Delta t(\eta^{**})$ & $\frac{ \Delta t(\eta^*) }{ \Delta t(\eta) }$ & $\frac{ \Delta t(\eta^{**}) }{ \Delta t(\eta) }$ \\ \hline\hline
 triangular		& 1 & 0.2280 & 0.2273 & 0.2582 & 1.00 & 1.13 \\
			& 2 & 0.1002 & 0.1250 & 0.1399 & 1.25 & 1.40 \\
			& 3 & 0.0567 & 0.0739 & 0.0896 & 1.30 & 1.58 \\ \hline
tetrahedral	& 1 & 0.0689 & 0.0635 & 0.0918 & 0.92 & 1.33 \\
			& 2 & 0.0327 & 0.0384 & 0.0510 & 1.17 & 1.56 \\
			& 3 & 0.0196 & 0.0243 & 0.0320 & 1.24 & 1.63
\end{tabular}
\end{center}
\caption{Parameter estimates on a regular tetrahedral mesh, using the penalty term, here denoted by $\eta$, derived in \cite{mulder14}, and using penalty terms $\eta^*$ and $\eta^{**}$.}
\label{tab:regMesh3}
\end{table}

For the case of tetrahedral meshes we can compare our penalty term estimates with the estimate derived in \cite{mulder14}. The penalty term derived there is equivalent to $\eta_e = p(p+2)$, with the penalty scaling function given by $\nu_h|_{\partial e\cap f}=1/(\epsilon_f\min_{e\in{\mathcal{T}_f}}d_{i,e} )$, where $d_{i,e}$ is the diameter of the inscribed sphere of $e$ and $\epsilon_f\in\{1/2,1\}$ is defined as in (\ref{eq:aIPdef2}). Their analysis can be readily extended to triangles by replacing the trace inverse inequality for tetrahedra by the trace inverse inequality for triangles, given in Theorem 3 of \cite{warburton03}, which is equivalent to setting $\eta_e=p(p+1)$. The results of these estimates are given in Table \ref{tab:regMesh3}, from which we can see that this penalty term estimate has a similar sharpness as $\eta^*$, but is significantly less sharp than $\eta^{**}$, having a time step size more than $1.5$ times smaller than when using $\eta^{**}$ for $p=2,3$ on tetrahedra and $p=3$ on triangles.

Since $\eta^{**}$ is significantly sharper than $\eta^{*}$, we will only use $\eta^{**}$ throughout the following numerical tests.

\subsubsection{Meshes with deformed elements}
In this subsection we consider the acoustic wave equation with $c=1$ again,  but now using deformed elements. For the penalty term we will only use $\eta^{**}$. An overview of the different meshes is listed below, and an illustration of the element subdivisions is given in Figures \ref{fig:elSubdiv2a} and \ref{fig:elSubdiv2b}.

\begin{itemize}
\item \textit{rectangular}$[x]$: 2D mesh constructed from unit squares, with each square subdivided into $2\times 2$ rectangles adjacent to a central node at $(x,x)$.
\item \textit{quadrilateral}$[x]$: 2D mesh constructed from unit squares, with each square subdivided into $2\times2$ smaller uniform squares, after which the central node at (0.5,0.5) is moved to $(x,x)$.
\item \textit{triangular}$[x]$: 2D mesh constructed from unit squares, with each square subdivided into four triangles adjacent to the central node at $(x,x)$.
\item \textit{cuboid}$[x]$: 3D mesh constructed from unit cubes, with each cube subdivided into $2\times2\times2$ cuboids adjacent to the central node at $(x,x,x)$. 
\item \textit{hexahedral}$[x]$: 3D mesh constructed from unit cubes, with each cube subdivided into $2\times2\times2$ smaller uniform cubes, after which the central node at $(0.5,0.5,0.5)$ is moved to $(x,x,x)$. 
\item \textit{tetrahedral}$[x]$: 3D mesh constructed from unit cubes, with each cube subdivided into six pyramids adjacent to the central node at $(x,x,x)$, and with each pyramid subdivided into four tetrahedra.
\end{itemize}

\begin{figure}[h]
\centering
\begin{subfigure}[b]{0.3\textwidth}
	\includegraphics[width=\textwidth]{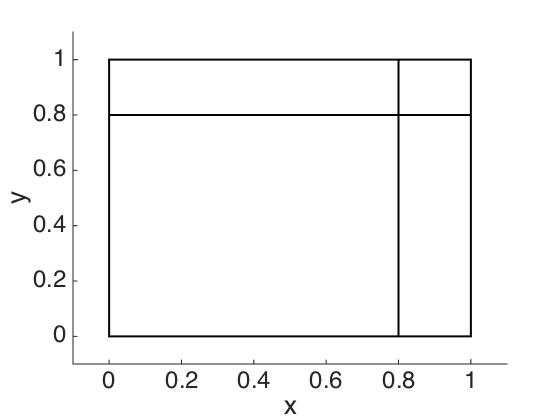}
        \caption{A square divided into four rectangles with a central node at (0.8,0.8).}
\end{subfigure} \;
\begin{subfigure}[b]{0.3\textwidth}
	\includegraphics[width=\textwidth]{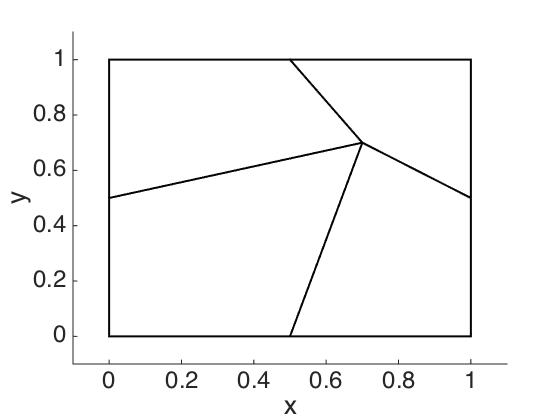}
        \caption{A square divided into four quadrilaterals with a central node at (0.7,0.7).}
\end{subfigure} \;
\begin{subfigure}[b]{0.3\textwidth}
	\includegraphics[width=\textwidth]{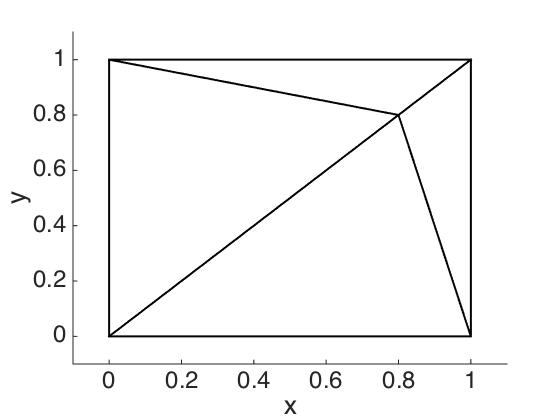}
        \caption{A square divided into four triangles with a central node at (0.7,0.7).}
\end{subfigure}
\caption{}
\label{fig:elSubdiv2a}
\end{figure}

\begin{figure}[h]
\centering
\begin{subfigure}[t]{0.3\textwidth}
	\includegraphics[width=\textwidth]{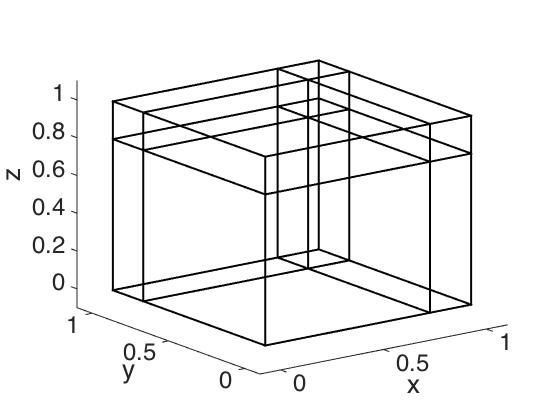}
        \caption{A cube divided into eight cuboids with a central node at (0.8,0.8,0.8).}
\end{subfigure} \;
\begin{subfigure}[t]{0.3\textwidth}
	\includegraphics[width=\textwidth]{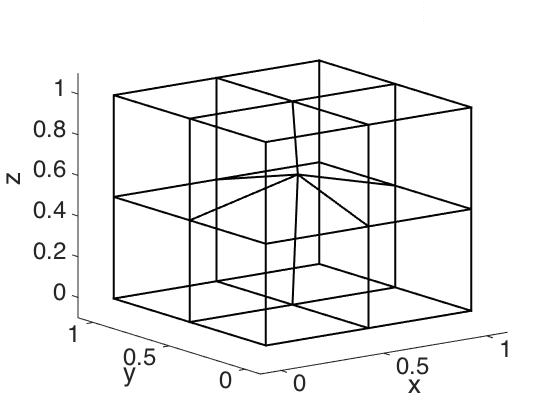}
        \caption{A cube divided into eight hexahedra with a central node at (0.6,0.6,0.6).}
\end{subfigure} \;
\begin{subfigure}[t]{0.3\textwidth}
	\includegraphics[width=\textwidth]{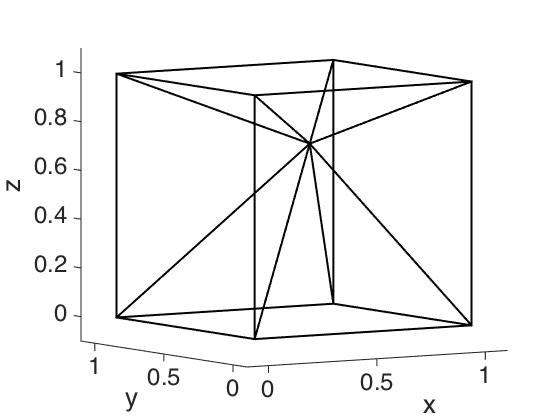}
        \caption{A cube divided into six pyramids with a central node at (0.7,0.7,0.7).}
\end{subfigure}
\caption{}
\label{fig:elSubdiv2b}
\end{figure}

\begin{table}[h]
\begin{center}
\begin{tabular}{c|c|c||c|c}
Mesh & x & p & $ \frac{\Delta t(\eta_{min}^{**})}{ \Delta t(\eta^{**}) }$&  $\frac{\Delta t(\eta^{**}) }{ \Delta t_{est}(\eta^{**}) }$ \\ \hline\hline
triangular$[x]$ 		& $\{0.5,0.7,0.9\}$	& 1 & [1.04, 1.06] & [1.14, 1.20] \\
				&			& 2 & [1.05, 1.09] & [1.18, 1.20] \\
				&			& 3 & [1.05, 1.09] & [1.19, 1.21] \\ \hline
rectangular$[x]$	&$\{0.5,0.7,0.9\}$	& 1 & [1.00, 1.00] & [1.00, 1.11] \\
				&			& 2 & [1.00, 1.00] & [1.00, 1.28] \\
				&			& 3 & [1.00, 1.00] & [1.00, 1.37] \\ \hline
quadrilateral$[x]$	& $\{0.5,0.6,0.7\}$	& 1 & [1.00, 1.05] & [1.00, 1.20] \\
				&			& 2 & [1.00, 1.04] & [1.00, 1.26] \\
				&			& 3 & [1.00, 1.05] & [1.00, 1.31]
\end{tabular}
\end{center}
\caption{Parameter estimates on deformed 2D meshes, using penalty term $\eta^{**}$. Intervals denote the range in which the time step ratios are found.}
\label{tab:defMesh2D}
\end{table}

\begin{table}[h]
\begin{center}
\begin{tabular}{c|c|c||c|c}
Mesh & x & p & $ \frac{\Delta t(\eta_{min}^{**})}{ \Delta t(\eta^{**}) }$&  $\frac{\Delta t(\eta^{**}) }{ \Delta t_{est}(\eta^{**}) }$ \\ \hline\hline
tetrahedral$[x]$		& $\{0.5,0.7,0.9\}$	& 1 & [1.06, 1.13] & [1.12, 1.14] \\
				& 			& 2 & [1.08, 1.17] & [1.14, 1.17] \\
				& 			& 3 & [1.07, 1.12] & [1.14, 1.15] \\ \hline
cuboid$[x]$		& $\{0.5,0.7,0.9\}$	& 1 & [1.00, 1.00] & [1.00, 1.11] \\
				&			& 2 & [1.00, 1.00] & [1.00, 1.28] \\
				&			& 3 & [1.00, 1.00] & [1.00, 1.37] \\ \hline
hexahedral$[x]$	& $\{0.5,0.6,0.65\}$	& 1 & [1.00, 1.09] & [1.00, 1.17] \\
				&			& 2 & [1.00, 1.07] & [1.00, 1.25] \\
				& 			& 3 & [1.00, 1.10] & [1.00, 1.28]
\end{tabular}
\end{center}
\caption{Parameter estimates on deformed 3D meshes, using penalty term $\eta^{**}$. Intervals denote the range in which the time step ratios are found.}
\label{tab:defMesh3D}
\end{table}

The results of the parameter estimates are given in Tables \ref{tab:defMesh2D} and \ref{tab:defMesh3D}. In all cases, the penalty term estimate causes a reduction in the time step size of no more than a factor $1.1$, with respect to the time step size using the minimal downscaled penalty term $\eta_{min}^{**}$. The time step estimates for triangular meshes causes a reduction of no more than $1.25$ and for the tetrahedral meshes a reduction of no more than $1.2$ with respect to the largest allowed time step size. For quadrilateral and hexahedral meshes the time step estimate tends to get less sharp for higher order polynomial basis functions and more strongly deformed meshes, but in all of our tests $\Delta t(\eta^{**})/ \Delta t_{est}(\eta^{**})$ remains below $1.4$.

Since it is hard to compute the element and face integrals for general quadrilateral and hexahedral meshes exactly, we approximate them using the Gauss--Legendre quadrature rule with $p+1$ points in every direction, where $p$ is the polynomial order of the basis functions. We do not consider meshes of the type \textit{quadrilateral}$[x]$ with $x\geq 0.75$ and \textit{hexahedral}$[x]$ with $x\geq 2/3$ since in those cases one of the elements no longer has a well-defined mapping.

\subsubsection{Meshes with piecewise linear parameters}
We now consider the acoustic wave equation with piecewise linear parameter fields $\rho$ and $c$ instead of constant parameters. An overview of the different meshes is listed below.

\begin{itemize}
\item \textit{squarePL}$[\rho_0,c_0]$: 2D mesh constructed from unit squares, with each square subdivided into $2\times 2$ smaller squares and with piecewise linear parameters $\rho,c$ such that $\rho=c=1$ at $y=0$ and $y=1$ and $\rho=\rho_0, c=c_0$ at $y=0.5$.
\item \textit{triPL}$[\rho_0,c_0]$: 2D mesh constructed from unit squares, with each square subdivided into four uniform triangles and with piecewise linear parameters $\rho,c$ such that $\rho=c=1$ at the boundary and $\rho=\rho_0, c=c_0$ at the center of each square.
\item \textit{cubicPL}$[\rho_0,c_0]$: 3D mesh constructed from unit cubes, with each cube subdivided into $2\times2\times2$ smaller cubes and with piecewise linear parameters $\rho,c$ such that $\rho=c=1$ at $z=0$ and $z=1$ and $\rho=\rho_0, c=c_0$ at $z=0.5$. 
\item \textit{tetrahedralPL}$[\rho_0,c_0]$: 3D mesh constructed from unit cubes, with each cube subdivided into $24$ uniform tetrahedra and with piecewise linear parameters $\rho,c$ such that $\rho=c=1$ at the boundary and $\rho=\rho_0, c=c_0$ at the center of each cube.
\end{itemize}

\begin{table}[h]
\begin{center}
\begin{tabular}{c|c|c||c|c}
Mesh & $(\rho_0,c_0)$ & p & $ \frac{\Delta t(\eta_{min}^{**})}{ \Delta t(\eta^{**}) }$&  $\frac{\Delta t(\eta^{**}) }{ \Delta t_{est}(\eta^{**}) }$ \\ \hline\hline
triPL$[\rho_0,c_0]$ & $\{(1,1), (5,5), (10,1), ...$ 	& 1 & [1.01,1.09] & [1.04,1.17] \\
& $\phantom{\{}(1,10), (10,10)\}$		& 2 & [1.01,1.11] & [1.06,1.19] \\
&								& 3 & [1,01,1.09] & [1.08,1.20] \\ \hline
squarePL$[\rho_0,c_0]$ & $\{(1,1), (5,5), (10,1),...$	& 1 & [1.00,1.00] & [1.00,1.00] \\
&$\phantom{\{}(1,10), (10,10)\}$		& 2 & [1.00,1.00] & [1.00,1.00] \\
&								& 3 & [1.00,1.00] & [1.00,1.04] \\ \hline
tetraPL$[\rho_0,c_0]$ & $\{(1,1), (5,5), (10,1),...$ 	& 1 & [1.12,1.19] & [1.13,1.14] \\
&$\phantom{\{}(1,10), (10,10)\}$		& 2 & [1.12,1.19] & [1.13,1.15] \\
&								& 3 & [1.10,1.12] & [1.14,1.15] \\ \hline
cubicPL$[\rho_0,c_0]$ & $\{(1,1), (5,5), (10,1), ...$	& 1 & [1.00,1.00] & [1.00,1.00] \\
&$\phantom{\{}(1,10), (10,10)\}$		& 2 & [1.00,1.00] & [1.00,1.00] \\
&								& 3 & [1.00,1.00] & [1.00,1.03] 
\end{tabular}
\end{center}
\caption{Parameter estimates on 2D and 3D meshes for the acoustic wave equation with piecewise linear parameters $\rho$ and $c$, using penalty term $\eta^{**}$. Intervals denote the range in which the time step ratios are found.}
\label{tab:PLMesh}
\end{table}

The results of the parameter estimates are given in Table \ref{tab:PLMesh}. The penalty term estimate cause a reduction in the time step size of no more than a factor $1.2$, with respect to the time step size using the minimal downscaled penalty term $\eta_{min}^{**}$ for triangular and tetrahedral elements and no reduction at all for square and cubic elements. The time step estimates for the triangular and tetrahedral meshes causes a reduction in the time step size of no more than $1.2$ with respect to the largest allowed time step size, while for the square and cubic meshes they often cause no reduction at all or a reduction not larger than $1.05$.

\subsubsection{Meshes for electromagnetic and elastic wave problems}
In this subsection we consider the electromagnetic wave equations as given in Example \ref{exm:Maxwell} and the 3D isotropic elastic wave equations as given in Example \ref{exm:anisotropicWave}. We use the electromagnetic wave equations to test heterogeneous media with sharp contrasts in material parameters, while we use the elastic wave equations to test media with large contrasts in primary and secondary wave velocities. An overview of the different meshes is listed below.

\begin{itemize}
\item \textit{tetraEM}$[\mu_1, \mu_2]$: 3D mesh constructed from unit cubes, with each cube subdivided into $24$ smaller uniform tetrahedra, where all tetrahedra below the surface $x=z$ have a relative magnetic permeability of $\mu_1$ and all tetrahedra above this surface have a permeability of $\mu_2$. For all tetrahedra the relative electric permittivity $\epsilon$ equals $1$.
\item \textit{cubicEM}$[\mu_1, \mu_2]$: 3D mesh constructed from unit cubes, with each cube subdivided into $2\times2\times2$ smaller uniform cubes, where the bottom four cubes have a relative magnetic permeability of $\mu_1$ and the top four cubes a permeability of $\mu_2$. For all cubes the relative electric permittivity $\epsilon$ equals $1$.
\item \textit{tetraISO}$[\lambda, \mu]$: 3D mesh constructed from unit cubes, with each cube subdivided into $24$ smaller uniform tetrahedra, and with all tetrahedra having Lam\'e parameters $\lambda$ and $\mu$ and mass density $\rho=1$.
\item \textit{cubicISO}$[\lambda, \mu]$: 3D mesh constructed from unit cubes, with each cube having Lam\'e parameters $\lambda$ and $\mu$ and mass density $\rho=1$.
\end{itemize}

\begin{table}[h]
\begin{center}
\begin{tabular}{c|c|c|c||c|c}
Mesh & $\mu_1^{-1}$ & $\mu_2^{-1}$ & p & $ \frac{\Delta t(\eta_{min}^{**})}{ \Delta t(\eta^{**}) }$&  $\frac{\Delta t(\eta^{**}) }{ \Delta t_{est}(\eta^{**}) }$ \\ \hline\hline
tetraEM$[\mu_1,\mu_2]$ & 1 & $\{1, 10, 100\}$ 	& 1 & [1.04, 1.05] & [1.14, 1.15] \\
&&								& 2 & [1.04, 1.07] & [1.15, 1.15] \\
&&								& 3 & [1.00, 1.04] & [1.15, 1.16] \\ \hline
cubicEM$[\mu_1,\mu_2]$ & 1 & $\{1, 10, 100\}$	& 1 & [1.00, 1.07] & [1.29, 1.29] \\
&&								& 2 & [1.00, 1.00] & [1.31, 1.35] \\
&&								& 3 & [1.00, 1.01] & [1.33, 1.35] 
\end{tabular}
\end{center}
\caption{Parameter estimates on a 3D electromagnetic mesh, using penalty term $\eta^{**}$. Intervals denote the range in which the time step ratios are found.}
\label{tab:EMMesh3D}
\end{table}

\begin{table}[h]
\begin{center}
\begin{tabular}{c|c|c||c|c}
Mesh & $(\lambda,\mu)$ & p & $ \frac{\Delta t(\eta_{min}^{**})}{ \Delta t(\eta^{**}) }$&  $\frac{\Delta t(\eta^{**}) }{ \Delta t_{est}(\eta^{**}) }$ \\ \hline\hline
tetraISO$[\lambda,\mu]$ & $\{(1,0),(0,1),(10,1),(100,1)\}$	& 1 & [1.00, 1.11] & [1.14, 1.15] \\
&									& 2 & [1.00, 1.14] & [1.14, 1.15] \\
&									& 3 & [1.00, 1.13] & [1.13, 1.15] \\ \hline
cubicSO$[\lambda,\mu]$ & $\{(1,0),(0,1),(10,1),(100,1)\}$	& 1 & [1.05, 1.20] & [1.24, 1.28] \\
&									& 2 & [1.00, 1.10] & [1.23, 1.31] \\
&									& 3 & [1.00, 1.07] & [1.24, 1.33] 
\end{tabular}
\end{center}
\caption{Parameter estimates on a 3D isotropic elastic tetrahedral mesh, using penalty term $\eta^{**}$. Intervals denote the range in which the time step ratios are found.}
\label{tab:IsoMesh3D}
\end{table}

The results of the parameter estimates are given in Tables \ref{tab:EMMesh3D} and \ref{tab:IsoMesh3D}. For the heterogeneous electromagnetic problems, the penalty term estimate causes a reduction in the time step size of no more than a factor $1.1$, with respect to the time step size using the minimal downscaled penalty term $\eta_{min}^{**}$. For the isotropic elastic meshes this reduction is no more than a factor $1.2$. The time step estimates for all the tetrahedral meshes causes a reduction in the time step size of no more than $1.25$ with respect to the largest allowed time step size, while for the cubic meshes this reduction remains below a factor $1.4$.

\section{Conclusion}
\label{sec:conclusion}
We have derived sharp and sufficient conditions for the penalty parameter in Theorem \ref{thm:condPen1} and Theorem \ref{thm:condPen2} to guarantee stability of the SIPDG method for linear wave problems. In addition, we derived sufficient upper bounds for the spectral radius and the time step size in Theorem \ref{thm:spectralRadiusDG} and Section \ref{sec:weightedDomDec}, by introducing a weighted mesh decomposition. These conditions hold for generic meshes, including unstructured nonconforming heterogeneous meshes of mixed element types, with different types of boundary conditions. Moreover, the estimates hold for general linear hyperbolic partial differential equations, including the acoustic wave equation, the (an)isotropic elastic wave equations, and Maxwell's equations. Both the penalty term and time step size can be efficiently computed in parallel using a power iteration method on each element and submesh, respectively.


To test the sharpness of our estimates we have considered several semiuniform meshes made of $N^d$ uniform cubes or squares, which are then uniformly subdivided into smaller meshes, including meshes with deformed elements and heterogeneous media. From the results we can see that the penalty term estimate given in Theorem \ref{thm:condPen2} is significantly sharper than the estimate in Theorem \ref{thm:condPen1} for 2D and 3D meshes. Especially for cubic elements, we see that downscaling the penalty estimate constructed using Theorem \ref{thm:condPen1} allows a time step size more than twice as large, while the penalty term in Theorem \ref{thm:condPen2} does not allow any downscaling at all. Furthermore, the penalty term estimate from Theorem \ref{thm:condPen2} allows a time step size more than a factor $1.5$ times larger than when using the penalty term estimate derived in \cite{mulder14}, when using square or cubic basis functions on tetrahedra or cubic basis functions on triangles. For regular square and cubic meshes, this same penalty term estimate, together with the time step estimate from Theorem \ref{thm:spectralRadiusDG} using the weighted mesh decomposition of (\ref{eq:nodalDomDec}), conforms with the largest allowed time step sizes analytically derived in \cite{agut13}.

In general we see that downscaling the penalty term from Theorem \ref{thm:condPen2} does not increase the maximum allowed time step size by more than a factor $1.2$, even when the material parameters are nonconstant within the elements, while our time step estimate, based on a vertex-based weighted mesh decomposition, does not become smaller than a factor $1.2$ compared to the maximum allowed time step size for triangular and tetrahedral meshes, and not smaller than a factor $1.4$ for quadrilateral and hexahedral meshes.

\section*{Acknowledgments}
We would like to thank Shell Global Solutions International B.V. for supporting this project, and in particular Wim Mulder for giving us useful advice and sharing his practical experiences related to the topic.

\bibliographystyle{abbrv}
\bibliography{ParameterEstimates}

\begin{thebibliography}{10}

\bibitem{agut13}
C.~Agut and J.~Diaz.
\newblock Stability analysis of the {I}nterior {P}enalty {D}iscontinuous
  {G}alerkin method for the wave equation.
\newblock {\em ESAIM: Mathematical Modelling and Numerical Analysis},
  47(03):903--932, 2013.

\bibitem{ainsworth06}
M.~Ainsworth, P.~Monk, and W.~Muniz.
\newblock Dispersive and dissipative properties of discontinuous {G}alerkin
  finite element methods for the second-order wave equation.
\newblock {\em Journal of Scientific Computing}, 27(1-3):5--40, 2006.

\bibitem{antonietti12}
P.~F. Antonietti, I.~Mazzieri, A.~Quarteroni, and F.~Rapetti.
\newblock {Non-conforming high order approximations of the elastodynamics
  equation}.
\newblock {\em Computer Methods in Applied Mechanics and Engineering},
  209:212--238, 2012.

\bibitem{arnold02}
D.~N. Arnold, F.~Brezzi, B.~Cockburn, and L.~D. Marini.
\newblock Unified analysis of discontinuous {G}alerkin methods for elliptic
  problems.
\newblock {\em SIAM Journal on Numerical Analysis}, 39(5):1749--1779, 2002.

\bibitem{buffa06}
A.~Buffa and I.~Perugia.
\newblock Discontinuous {G}alerkin approximation of the {M}axwell eigenproblem.
\newblock {\em SIAM Journal on Numerical Analysis}, 44(5):2198--2226, 2006.

\bibitem{chin99}
M.~J.~S. Chin-Joe-Kong, W.~A. Mulder, and M.~Van~Veldhuizen.
\newblock Higher-order triangular and tetrahedral finite elements with mass
  lumping for solving the wave equation.
\newblock {\em Journal of Engineering Mathematics}, 35(4):405--426, 1999.

\bibitem{cohen01}
G.~Cohen, P.~Joly, J.~E. Roberts, and N.~Tordjman.
\newblock Higher order triangular finite elements with mass lumping for the
  wave equation.
\newblock {\em SIAM Journal on Numerical Analysis}, 38(6):2047--2078, 2001.

\bibitem{deBasabe10}
J.~D. De~Basabe and M.~K. Sen.
\newblock {Stability of the high-order finite elements for acoustic or elastic
  wave propagation with high-order time stepping}.
\newblock {\em Geophysical Journal International}, 181(1):577--590, 2010.

\bibitem{epshteyn07}
Y.~Epshteyn and B.~Rivi{\`e}re.
\newblock Estimation of penalty parameters for symmetric interior penalty
  {G}alerkin methods.
\newblock {\em Journal of Computational and Applied Mathematics},
  206(2):843--872, 2007.

\bibitem{grote06}
M.~J. Grote, A.~Schneebeli, and D.~Sch{\"o}tzau.
\newblock {Discontinuous {G}alerkin finite element method for the wave
  equation}.
\newblock {\em SIAM Journal on Numerical Analysis}, 44(6):2408--2431, 2006.

\bibitem{grote08}
M.~J. Grote, A.~Schneebeli, and D.~Sch{\"o}tzau.
\newblock Interior penalty discontinuous {G}alerkin method for {M}axwell's
  equations: optimal {L}2-norm error estimates.
\newblock {\em IMA Journal of Numerical Analysis}, 28(3):440--468, 2008.

\bibitem{irons71}
B.~M. Irons and G.~Treharne.
\newblock A bound theorem in eigenvalues and its practical applications.
\newblock In {\em Proceedings of the 2nd Conference on Matrix Method in
  Structural Mechanics, Wright-Patterson AFB, Ohio}, 1971.

\bibitem{komatitsch99}
D.~Komatitsch and J.~Tromp.
\newblock Introduction to the spectral element method for three-dimensional
  seismic wave propagation.
\newblock {\em Geophysical Journal International}, 139(3):806--822, 1999.

\bibitem{lions72}
J.~L. Lions and E.~Magenes.
\newblock {\em Non-homogeneous boundary value problems and applications},
  volume~1.
\newblock Springer Verlag, 2012.

\bibitem{mulder13}
W.~A. Mulder.
\newblock New triangular mass-lumped finite elements of degree six for wave
  propagation.
\newblock {\em Progress in Electromagnetics Research PIER,(141) 2013}, 2013.

\bibitem{mulder14}
W.~A. Mulder, E.~Zhebel, and S.~Minisini.
\newblock {Time-stepping stability of continuous and discontinuous
  finite-element methods for 3-D wave propagation}.
\newblock {\em Geophysical Journal International}, 196(2):1123--1133, 2014.

\bibitem{shahbazi05}
K.~Shahbazi.
\newblock An explicit expression for the penalty parameter of the interior
  penalty method.
\newblock {\em Journal of Computational Physics}, 205(2):401--407, 2005.

\bibitem{warburton03}
T.~Warburton and J.~S. Hesthaven.
\newblock On the constants in hp-finite element trace inverse inequalities.
\newblock {\em Computer methods in applied mechanics and engineering},
  192(25):2765--2773, 2003.

\end{thebibliography}

\appendix
\section{Linear Algebra}
\begin{lem} 
Let $C\in\mathbb{R}^{d\times m\times m\times d}_{sym}$ be a symmetric tensor such that
\begin{align*}
\tsigma^t:C:\tsigma &\geq 0, && \tsigma\in\mathbb{R}^{d\times m}.
\end{align*}
Then there exists a symmetric tensor field $C^{1/2}\in\mathbb{R}_{sym}^{d\times m\times m\times d}$ such that $C^{1/2}:C^{1/2} = C$.
\label{lem:Csqrt}
\end{lem}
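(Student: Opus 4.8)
The plan is to reduce this to the existence of a square root for a symmetric positive semidefinite matrix, by \emph{flattening} the fourth-order tensor $C$ into a matrix acting on $\mathbb{R}^{dm}$ in a way that turns the tensor transpose into the matrix transpose and the double dot product into ordinary matrix multiplication. Fix the bijection sending a pair $(a,b)$ with $a\in\mathbb{N}_d$, $b\in\mathbb{N}_m$ to a single index in $\mathbb{N}_{dm}$, and define the flattened matrix $\hat C\in\mathbb{R}^{dm\times dm}$ by $\hat C_{(a,b),(c,d)} := C_{a,b,d,c}$, together with the flattening $\hat{\tsigma}_{(a,b)} := \tsigma_{a,b}$ of any $\tsigma\in\mathbb{R}^{d\times m}$. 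The index reversal in the last two tensor slots is deliberately chosen to match the contraction pattern of the double dot product $:$ defined in Section \ref{sec:tenNotation}.

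First I would establish three correspondences under this flattening. (i) The symmetry $C=C^t$, which reads $C_{i_1,i_2,i_3,i_4}=C_{i_4,i_3,i_2,i_1}$, translates directly into $\hat C_{(a,b),(c,d)}=\hat C_{(c,d),(a,b)}$, so $\hat C$ is a \emph{symmetric} matrix. (ii) A direct index computation, carrying out the two contracted sums in $\tsigma^t:C:\tsigma$, shows that $\tsigma^t:C:\tsigma=\hat{\tsigma}^t\hat C\hat{\tsigma}$ for every $\tsigma\in\mathbb{R}^{d\times m}$; hence the hypothesis $\tsigma^t:C:\tsigma\ge 0$ for all $\tsigma$ makes $\hat C$ \emph{positive semidefinite}. (iii) For any two fourth-order tensors $A,B$ of matching dimensions, contracting the middle index pair shows that the double dot product corresponds to matrix multiplication, $\widehat{A:B}=\hat A\,\hat B$, since summation over the flattened middle index $(k_2,k_1)$ is exactly the matrix product over that index.

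With these in hand the conclusion is immediate. By the spectral theorem, the symmetric positive semidefinite matrix $\hat C$ admits a symmetric positive semidefinite square root $\hat B\in\mathbb{R}^{dm\times dm}$ with $\hat B\,\hat B=\hat C$. Unflattening, I define $C^{1/2}$ by $C^{1/2}_{a,b,d,c}:=\hat B_{(a,b),(c,d)}$. Then symmetry of $\hat B$ together with correspondence (i) gives $C^{1/2}=(C^{1/2})^t$, so $C^{1/2}\in\mathbb{R}^{d\times m\times m\times d}_{sym}$, and the identity $\hat B\,\hat B=\hat C$ together with correspondence (iii) and the injectivity of the flattening gives $C^{1/2}:C^{1/2}=C$, as required.

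The only real work, and the step where care is needed, is the index bookkeeping in (i)--(iii): the flattening must be set up so that both transposition and the double dot product are represented \emph{faithfully} (and so that the tensor quadratic form maps to the matrix quadratic form). Once that dictionary is verified, no analysis beyond the standard symmetric matrix square root is needed, and the nonnegativity hypothesis on $C$ is used precisely to guarantee positive semidefiniteness of $\hat C$.
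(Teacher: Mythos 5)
Your proof is correct and takes essentially the same route as the paper: the paper's one-line proof views $\tsigma\mapsto C:\tsigma$ as a linear, self-adjoint, positive semidefinite operator on the finite-dimensional inner-product space $\mathbb{R}^{d\times m}$ and invokes the spectral theorem, which is precisely what your flattening to a symmetric positive semidefinite matrix $\hat C\in\mathbb{R}^{dm\times dm}$ accomplishes in coordinates. Your dictionary (i)--(iii) simply makes explicit the identification (including the index reversal needed so that the double dot product becomes matrix multiplication and $C=C^t$ becomes matrix symmetry) that the paper leaves implicit.
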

\begin{proof}
The result follows from the fact that $\tsigma\rightarrow C:\tsigma$ is a linear, self-adjoint, and positive semidefinite operator on a finite-dimensional subspace $\mathbb{R}^{d\times m}$.
\end{proof}

\begin{lem}
Let $\vn\in\mathbb{R}^d$ be a vector and $C\in\mathbb{R}^{d\times m\times m\times d}_{sym}$ be a symmetric tensor such that
\begin{align}
\tsigma^t:C:\tsigma &\geq 0, && \tsigma\in\mathbb{R}^{d\times m}.
\label{eq:int1CnInv}
\end{align}
Also, define the second order tensor $\ten{c}_{\vn} := \vn\cdot C\cdot\vn$ and the following function space:
\begin{align*}
U &:= \big\{ \vu\in\mathbb{R}^m \;\big|\; \vu = \vn\cdot C:\tsigma, \text{ for some }\tsigma\in\mathbb{R}^{d\times m} \big\}.
\end{align*}
There exists a pseudoinverse $\ten{c}_{\vn}^{-1}\in\mathbb{R}^{m\times m}$ such that $\ten{c}_{\vn}^{-1}\cdot\ten{c}_{\vn}\cdot\vu = \ten{c}_{\vn}\cdot\ten{c}_{\vn}^{-1}\cdot\vu = \vu$, for all $\vu\in U$. Moreover, both $\ten{c}_{\vn}$ and $\ten{c}_{\vn}^{-1}$ are symmetric and positive semidefinite.
\label{lem:CnInv}
\end{lem}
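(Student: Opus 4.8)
The plan is to build $\ten{c}_{\vn}^{-1}$ as the spectral pseudoinverse of $\ten{c}_{\vn}$ and to reduce the two stated identities to a single range inclusion. The starting point is the algebraic identity
\[
\vv\cdot\ten{c}_{\vn}\cdot\vv = (\vn\vv)^t : C : (\vn\vv), \qquad \vv\in\mathbb{R}^m,
\]
where $\vn\vv\in\mathbb{R}^{d\times m}$ is the tensor product; this is verified by expanding both sides in coordinates. Combined with the symmetry $C=C^t$, which yields $[\ten{c}_{\vn}]_{ij}=[\ten{c}_{\vn}]_{ji}$ after relabeling the two contracted indices, and with the hypothesis (\ref{eq:int1CnInv}), this immediately shows that $\ten{c}_{\vn}$ is symmetric and positive semidefinite. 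I would then diagonalize $\ten{c}_{\vn}$ in an orthonormal eigenbasis and define $\ten{c}_{\vn}^{-1}$ to have the same eigenvectors, inverting the strictly positive eigenvalues and leaving the zero eigenvalues in place. By construction $\ten{c}_{\vn}^{-1}$ is symmetric and positive semidefinite, and both $\ten{c}_{\vn}^{-1}\cdot\ten{c}_{\vn}$ and $\ten{c}_{\vn}\cdot\ten{c}_{\vn}^{-1}$ equal the orthogonal projector onto $\mathrm{Range}(\ten{c}_{\vn})=\mathrm{Ker}(\ten{c}_{\vn})^{\perp}$.

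With this construction, the identities $\ten{c}_{\vn}^{-1}\cdot\ten{c}_{\vn}\cdot\vu=\ten{c}_{\vn}\cdot\ten{c}_{\vn}^{-1}\cdot\vu=\vu$ hold for every $\vu\in\mathrm{Range}(\ten{c}_{\vn})$, since the projector fixes exactly that subspace. The whole lemma therefore reduces to proving $U\subseteq\mathrm{Range}(\ten{c}_{\vn})$, or equivalently, since $\ten{c}_{\vn}$ is symmetric, the orthogonality $U\perp\mathrm{Ker}(\ten{c}_{\vn})$. I expect this to be the crux of the argument, and it is the only place where the symmetry and semidefiniteness of $C$ are used in an essential way.

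To establish the orthogonality I would take arbitrary $\vv\in\mathrm{Ker}(\ten{c}_{\vn})$ and $\vu=\vn\cdot C:\ttau\in U$. Semidefiniteness gives $\vv\cdot\ten{c}_{\vn}\cdot\vv=0$, so by the identity above $(\vn\vv)^t : C : (\vn\vv)=0$. Using Lemma \ref{lem:Csqrt} to write $C=C^{1/2}:C^{1/2}$ with $C^{1/2}$ symmetric, this reads $\|C^{1/2}:(\vn\vv)\|^2=0$, hence $C^{1/2}:(\vn\vv)=\tzero$ and therefore $C:(\vn\vv)=\tzero$. Finally, viewing $\tsigma\mapsto C:\tsigma$ as a self-adjoint operator on $\mathbb{R}^{d\times m}$ — which is exactly the content of $C=C^t$ — I would compute, with $(\cdot,\cdot)$ the inner product on $\mathbb{R}^{d\times m}$,
\[
\vv\cdot\vu = \big(\vn\vv,\;C:\ttau\big) = \big(C:(\vn\vv),\;\ttau\big) = 0.
\]
As $\vv$ and $\vu$ were arbitrary, this proves $U\perp\mathrm{Ker}(\ten{c}_{\vn})$, and the lemma follows. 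The remaining steps — the two coordinate expansions behind the quadratic-form identity and behind the self-adjointness of $C$ — are routine consequences of the definitions of the tensor operations.
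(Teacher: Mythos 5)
Your proof is correct, and it reaches the conclusion by a genuinely different reduction than the paper's. The paper never constructs $\ten{c}_{\vn}^{-1}$ explicitly: it asserts that it suffices to prove positive definiteness of the quadratic form on $U$, i.e. $U\cap\mathrm{Ker}(\ten{c}_{\vn})=\{0\}$, and then establishes exactly that with the same two ingredients you use (Lemma \ref{lem:Csqrt} and the symmetry of $C$), by contracting $C:\vn\vu=\tzero$ with the generator $\tsigma^t$ of $\vu$ to conclude $\|\vu\|^2=0$. You instead build the spectral (Moore--Penrose) pseudoinverse and reduce the lemma to the range inclusion $U\subseteq\mathrm{Range}(\ten{c}_{\vn})$, which you prove as the orthogonality $U\perp\mathrm{Ker}(\ten{c}_{\vn})$. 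In the present setting the two criteria are equivalent, but only because of the additional (unstated in the paper) fact that $\mathrm{Range}(\ten{c}_{\vn})\subseteq U$, which follows from $\ten{c}_{\vn}\cdot\vv=\vn\cdot C:(\vn\vv)$; without that fact, positive definiteness on $U$ by itself does not yield the two identities, as the example $\ten{c}_{\vn}=\mathrm{diag}(1,0)$, $U=\mathrm{span}\{(1,1)^t\}$ shows, since $\ten{c}_{\vn}\cdot B\cdot(1,1)^t$ has vanishing second component for every matrix $B$. So your reduction is the one whose sufficiency is transparent, and in this respect your write-up is more airtight than the paper's terse justification; the price is an appeal to the spectral theorem, whereas the paper's argument stays entirely at the level of quadratic forms and is shorter. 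Your final computation $\vv\cdot\vu=(\vn\vv,\;C:\ttau)=(C:(\vn\vv),\;\ttau)=0$ is the same algebraic trick as the paper's, applied to a kernel vector $\vv$ paired with the generator $\ttau$ of an arbitrary $\vu\in U$, rather than to a single $\vu\in U$ paired with its own generator.
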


\begin{proof}
The fact that $\ten{c}_{\vn}$ is symmetric follows from its definition and the fact that $C$ is symmetric. The fact that this tensor is also positive semidefinite follows from (\ref{eq:int1CnInv}) and by writing
\begin{align*}
\vu\cdot\ten{c}_{\vn}\cdot\vu &= \vu\vn : C : \vn\vu \geq 0, && \vu\in \mathbb{R}^m.
\end{align*}
To prove that the pseudoinverse with respect to $U$ exists and is symmetric positive semidefinite, it is sufficient to show that
\begin{align}
\vu\cdot\ten{c}_{\vn}\cdot\vu &>0, &&\vu\in  U,\vu\neq\vzero.
\label{eq:int2CnInv}
\end{align}
Since $\ten{c}_{\vn}$ is positive semidefinite we then only need to show that 
\begin{align}
\vu\in U \;\wedge\; \vu\cdot\ten{c}_{\vn}\cdot\vu &=0   \quad\Longrightarrow\quad \vu = \vzero.
\label{eq:int3CnInv}
\end{align}
Now take an arbitrary $\vu\in U$, and suppose that $\vu\cdot\ten{c}_{\vn}\cdot\vu =0$. Using Lemma \ref{lem:Csqrt} we can write
\begin{align*}
0=\vu\cdot\ten{c}_{\vn}\cdot\vu &= \| C^{1/2}:\vn\vu \|^2.
\end{align*}
From this it follows that $C:\vn\vu = \tzero$. Because of the definition of $U$ we can write $\vu = \vn\cdot C:\tsigma$ for some $\tsigma\in\mathbb{R}^{d\times m}$, and therefore $C:\vn(\vn\cdot C:\tsigma)=\tzero$. Applying the double dot product with $\tsigma^t$ on the left side gives
\begin{align*}
0 &= \tsigma^t:C:\vn(\vn\cdot C:\tsigma) 
=  (\tsigma^t:C\cdot\vn)\cdot(\vn\cdot C:\tsigma) 
= \| \vu \|^2.
\end{align*}
Therefore, $\vu=\vzero$, which proves (\ref{eq:int3CnInv}).
\end{proof}

\begin{lem}
Let $\{a_i\}_{i=1}^n$ be a set of nonnegative scalars, and let $\{ b_i \}_{i=1}^n$ be a set of positive scalars. Then
\begin{align*}
\frac{ \sum_{i=1}^n a_i}{ \sum_{i=1}^n b_i} \leq \max_{i=1,\dots,n} \frac{a_i}{b_i}.
\end{align*}
\label{lem:ineqFrac}
\end{lem}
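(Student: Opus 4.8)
The plan is to reduce the claimed inequality to a single linear estimate by exploiting the strict positivity of the denominators. First I would introduce the quantity
\[
K := \max_{i=1,\dots,n} \frac{a_i}{b_i},
\]
which is well defined because the maximum is taken over a finite, nonempty set. By definition of $K$, for every index $i$ we have $a_i/b_i \leq K$.

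Next I would clear the denominator in each of these pointwise bounds. Since each $b_i$ is strictly positive, multiplying the inequality $a_i/b_i \leq K$ by $b_i$ preserves its direction and gives $a_i \leq K b_i$ for all $i$. Summing these $n$ inequalities yields
\[
\sum_{i=1}^n a_i \leq K \sum_{i=1}^n b_i.
\]

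Finally, because every $b_i > 0$, the total $\sum_{i=1}^n b_i$ is strictly positive, so I may divide both sides by it without reversing the inequality, obtaining
\[
\frac{\sum_{i=1}^n a_i}{\sum_{i=1}^n b_i} \leq K = \max_{i=1,\dots,n} \frac{a_i}{b_i},
\]
which is exactly the claim. Notably, the nonnegativity of the $a_i$ is not needed for this argument; it is the positivity of the $b_i$ — guaranteeing that both the scaling step and the final division are valid — that does all the work. Consequently there is no substantial obstacle: the only point requiring care is to invoke $b_i > 0$ explicitly at both the scaling and the division steps, since the inequality can fail if a denominator is allowed to vanish or change sign.
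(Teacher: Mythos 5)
Your proof is correct and follows essentially the same route as the paper: the paper writes the quotient as the convex combination $\sum_i (a_i/b_i)\,b_i/\sum_j b_j$ and bounds each ratio by the maximum, which is just a rearrangement of your steps $a_i \leq K b_i$, summation, and division by the positive total $\sum_i b_i$. Your side remark is also accurate — neither argument actually uses the nonnegativity of the $a_i$, only the positivity of the $b_i$.
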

\begin{proof}
\begin{align*}
\frac{ \sum_{i=1}^n a_i}{ \sum_{i=1}^n b_i}  = \sum_{i=1}^n \frac{a_i}{b_i} \frac{b_i}{\sum_{j=1}^n b_j} &\leq  \max_{k=1,\dots,n} \frac{a_k}{b_k} \sum_{i=1}^n \frac{b_i}{\sum_{j=1}^n b_j} = \max_{k=1,\dots,n} \frac{a_k}{b_k} .
\end{align*}
\end{proof}

\begin{lem}
Let $A\in\mathbb{R}^{n\times n}_{sym}$ be a symmetric matrix and $M\in\mathbb{R}^{n\times n}_{sym}$ a positive definite matrix. Then there exists a diagonalization $M^{-1}A=VDV^{-1}$, such that $D$ is a real diagonal matrix and $V$ satisfies $V^tMV = I$, where $I$ is the identity matrix.
\label{lem:propMinvA}
\end{lem}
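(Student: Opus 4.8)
The plan is to reduce the generalized eigenvalue problem for the pair $(A,M)$ to an ordinary symmetric eigenvalue problem by passing through the symmetric positive definite square root of $M$, and then to invoke the spectral theorem for real symmetric matrices. Since $M$ is symmetric positive definite, it admits a symmetric positive definite square root $M^{1/2}$: one diagonalizes $M=P\Lambda P^t$ with $P$ orthogonal and $\Lambda$ a positive diagonal matrix, and sets $M^{1/2}:=P\Lambda^{1/2}P^t$. Its inverse $M^{-1/2}$ is then likewise symmetric and positive definite.

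First I would form the matrix $\tilde A := M^{-1/2}AM^{-1/2}$. Because $A=A^t$ and $M^{-1/2}=(M^{-1/2})^t$, this matrix is symmetric, so by the spectral theorem there exist an orthogonal matrix $Q$, satisfying $Q^tQ=I$, and a real diagonal matrix $D$ such that $Q^t\tilde A Q = D$, equivalently $\tilde A = QDQ^t$.

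Next I would set $V:=M^{-1/2}Q$ and verify the two required properties. For the first,
\begin{align*}
V^tMV = Q^t M^{-1/2} M M^{-1/2} Q = Q^tQ = I,
\end{align*}
which in particular shows that $V$ is nonsingular, with $V^{-1}=Q^tM^{1/2}$. For the second, using $\tilde A = QDQ^t$,
\begin{align*}
VDV^{-1} = M^{-1/2}QDQ^tM^{1/2} = M^{-1/2}\tilde A M^{1/2} = M^{-1/2}\big(M^{-1/2}AM^{-1/2}\big)M^{1/2} = M^{-1}A,
\end{align*}
so $M^{-1}A=VDV^{-1}$ with $D$ real diagonal, as claimed.

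I do not anticipate any serious obstacle: the only ingredients are the existence of the symmetric square root of a positive definite matrix and the spectral theorem, both entirely standard. The one point deserving a line of care is confirming that $\tilde A$ is genuinely symmetric, so that the spectral theorem applies, and that $V$ is invertible, so that $VDV^{-1}$ is a legitimate diagonalization; both follow immediately from the positive definiteness of $M$, which guarantees $M^{-1/2}$ exists and is symmetric.
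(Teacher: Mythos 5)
Your proposal is correct and follows essentially the same route as the paper's own proof: pass to $\tilde A = M^{-1/2}AM^{-1/2}$ via the symmetric positive definite square root of $M$, apply the spectral theorem to get $\tilde A = QDQ^t$, and set $V = M^{-1/2}Q$. Your write-up is in fact slightly more careful, spelling out the verification of $V^tMV=I$ and $VDV^{-1}=M^{-1}A$ explicitly (the paper's concluding line even has the inverse on the wrong factor, writing $M^{-1}A=V^{-1}DV$, which your computation correctly gives as $VDV^{-1}$).
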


\begin{proof}
Since $M$ is symmetric positive definite, there exists a symmetric positive definite matrix $M^{1/2}$ such that $M^{1/2}M^{1/2}=M$. Define $\tilde{A}:=M^{-1/2}AM^{-1/2}$. Since $A$ is symmetric, $\tilde{A}$ is symmetric as well and can be diagonalised as $\tilde A = \tilde{V}D\tilde{V}^{-1}$ with $D$ a diagonal real matrix and $\tilde V$ satisfying $\tilde{V}^t\tilde{V}=I$. Now define $V:=M^{-1/2}\tilde{V}$. Then $M^{-1}A=V^{-1}DV$ and $V^tMV=I$.
\end{proof}

\begin{lem}
Let $A\in\mathbb{R}^{n\times n}_{sym}$ be a symmetric matrix and $M\in\mathbb{R}^{n\times n}_{sym}$ a positive definite matrix. Then
\begin{align*}
\sup_{\vvct{u}\in\mathbb{R}^n, \vvct{u}\neq \vvct{0} }\frac{ |\vvct{u}^t A \vvct{u}| }{ \vvct{u}^tM\vvct{u} } &= \lambda_{max}\big( M^{-1}A \big), 
\end{align*}
where $\lambda_{max}\big( M^{-1}A \big)$ denotes the largest eigenvalue of $M^{-1}A$ in magnitude.
\label{lem:largestEigValue}
\end{lem}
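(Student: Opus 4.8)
The plan is to reduce the generalized Rayleigh quotient on the left-hand side to an ordinary one by means of the simultaneous diagonalization supplied by Lemma \ref{lem:propMinvA}. First I would invoke that lemma to write $M^{-1}A = VDV^{-1}$, where $D$ is a real diagonal matrix and $V$ satisfies $V^tMV = I$. The diagonal entries $D_{ii}$ are exactly the eigenvalues of $M^{-1}A$, so $\lambda_{max}(M^{-1}A) = \max_i |D_{ii}|$, and the task becomes showing that the supremum equals this quantity.

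Next, since $V$ is nonsingular, the change of variables $\vvct{u} = V\vvct{y}$ is a bijection of $\mathbb{R}^n\setminus\{\vvct{0}\}$ onto itself, so the supremum may equivalently be taken over all $\vvct{y}\neq\vvct{0}$. The denominator transforms cleanly using $V^tMV = I$, giving $\vvct{u}^tM\vvct{u} = \vvct{y}^t(V^tMV)\vvct{y} = \vvct{y}^t\vvct{y}$. For the numerator I would first deduce $A = MVDV^{-1}$ from the diagonalization and then compute $V^tAV = (V^tMV)DV^{-1}V = D$, whence $\vvct{u}^tA\vvct{u} = \vvct{y}^tD\vvct{y} = \sum_i D_{ii}\,y_i^2$. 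The quotient then reduces to the elementary expression $\big|\sum_i D_{ii}\,y_i^2\big| \big/ \sum_i y_i^2$.

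Finally I would bound this ordinary quotient from both sides. The triangle inequality gives $\big|\sum_i D_{ii}\,y_i^2\big| \leq \max_i|D_{ii}| \sum_i y_i^2$, so the supremum is at most $\max_i|D_{ii}|$; taking $\vvct{y}$ to be a standard basis vector indexed by an entry attaining the maximum magnitude shows the bound is attained. Hence the supremum equals $\max_i|D_{ii}| = \lambda_{max}(M^{-1}A)$.

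The computation is almost entirely routine; the one step that requires care is the identity $V^tAV = D$, since it is precisely here that both defining properties of $V$ from Lemma \ref{lem:propMinvA}—the diagonalization of $M^{-1}A$ and the $M$-orthonormality $V^tMV = I$—must be combined correctly. The presence of the absolute value in the numerator is what forces the appearance of the \emph{largest eigenvalue in magnitude} rather than simply the largest eigenvalue, which is accommodated automatically in the final bounding step.
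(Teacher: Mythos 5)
Your proof is correct and follows essentially the same route as the paper: both invoke Lemma \ref{lem:propMinvA} to obtain the $M$-orthogonal diagonalization, substitute $\vvct{u}=V\vvct{y}$ to reduce the generalized Rayleigh quotient to $|\vvct{y}^tD\vvct{y}|/\vvct{y}^t\vvct{y}$, and identify the supremum with $\max_i|D_{ii}|$. The only difference is cosmetic: you spell out the final elementary bound (triangle inequality plus attainment at a standard basis vector), which the paper states without proof.
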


\begin{proof}
From Lemma \ref{lem:propMinvA} it follows that there exist a diagonal matrix $D$ and a nonsingular matrix $V$ such that $MVDV^{-1} = A$ and $V^tMV = I$. Now consider an arbitrary $\vvu\in\mathbb{R}^n, \vvu\neq\vvct{0}$. We set $\vvct{w}:=V^{-1}\vvct{u}$ to obtain
\begin{align*}
\frac{ |\vvct{u}^t A \vvct{u}| }{ \vvct{u}^tM\vvct{u} }&= \frac{ |\vvct{w}^t V^tMVDV^{-1}V \vvct{w}| }{ \vvct{w}^tV^tMV\vvct{w} } 
= \frac{ |\vvct{w}^t D \vvct{w}| }{ \vvct{w}^t\vvct{w} }.
\end{align*}
The lemma follows from this equality and the following relation:
\begin{align*}
 \sup_{\vvct{w}\in\mathbb{R}^n, \vvct{w}\neq \vvct{0} }\frac{ |\vvct{w}^t D \vvct{w}| }{ \vvct{w}^t\vvct{w} }&=  \max_{i=1,\dots,N} |D_{ii}| = \lambda_{max}\big( M^{-1}A \big).
\end{align*}
\end{proof}

\end{document}